\newtheorem{theorem}{Theorem}[section]
\newtheorem{corollary}[theorem]{Corollary}
\newtheorem{lemma}[theorem]{Lemma}
\newtheorem{proposition}[theorem]{Proposition}
\newtheorem{definition}[theorem]{Definition}
\newtheorem{remark}[theorem]{Remark}
\newtheorem{assumption}[theorem]{Assumption}
\newcommand{\Var}{\mathrm{Var}}
\newcommand{\Cov}{\mathrm{Cov}}
\newcommand{\Corr}{\mathrm{Corr}}
\newcommand{\Law}{\mathcal{L}} 
\title[Mean-Field Analysis of Rating Systems]{Mean-Field Analysis and Optimal Control of a Dynamic Rating and Matchmaking System}
\author[Wataru Nozawa]{Wataru Nozawa$^1$}
\address{$^1$ Department of Economics, Fukuoka University, \newline
8-19-1 Nanakuma, Jonan-ku, Fukuoka 814-0180, Japan}
\email{nozawa@fukuoka-u.ac.jp}
\subjclass{Primary: 82C40, 93E20; Secondary: 91A16.}
\keywords{Kinetic theory, mean-field limit, rating systems, matchmaking, optimal control, separation principle.}
\begin{document}

\maketitle

\begin{abstract}
Large-scale competitive platforms are interacting multi-agent systems in which latent skills drift over time and pairwise interactions are shaped by matchmaking.
We study a controlled rating dynamics in the mean-field limit and derive a kinetic description for the joint evolution of skills and ratings.
In the Gaussian regime, we prove an exact moment closure and obtain a low-dimensional deterministic state dynamics for rating accuracy.
This yields three main insights.
First, skill drift imposes an intrinsic ceiling on long-run accuracy (the ``Red Queen'' effect).
Second, with period-by-period scale control, the information content of interactions satisfies an invariance principle: under signal-matched scaling, the one-step accuracy transition is independent of matchmaking intensity.
Third, the optimal platform policy separates: filtering is implemented by a greedy choice of the gain and rating scale, while matchmaking reduces to a static trade-off between match utility and sorting costs.
\end{abstract}

\section{Introduction}
\label{sec:intro}

Large-scale competitive platforms---online games, crowdsourcing markets, and digital labor exchanges---continuously infer and display \emph{ratings} for a massive population of users whose latent abilities are unobserved and evolve over time.
A central operational challenge is therefore \emph{tracking}: the platform must update ratings quickly enough to follow drifting skills, while simultaneously organizing pairwise interactions (matchmaking) to balance fairness, engagement, and latency.

From a modeling viewpoint, such platforms are naturally described as interacting-agent systems with control acting through both (i) \emph{how signals are processed} (the rating update rule) and (ii) \emph{who interacts with whom} (the interaction kernel).
This leads to a mean-field control problem: the controller does not only estimate the state distribution but steers the population dynamics by choosing the update parameters and the matching assortativity.
While individual-level rating algorithms---most notably Elo \cite{Elo1978} and Bayesian/state-space extensions such as Glicko \cite{Glickman1999,Glickman2001} and TrueSkill \cite{Herbrich2007}---are widely deployed, a rigorous understanding of their \emph{collective} dynamics under \emph{platform design} objectives remains comparatively limited.

This paper develops a kinetic/mean-field framework for a stylized but analytically tractable design problem.
Agents have latent skills that follow a stationary drift process (an AR(1)/discrete OU dynamics with persistence $\lambda$), and the platform observes noisy pairwise outcomes generated by skill differences.
At each period $t$, the platform chooses a control triple
\[
u_t=(K_t,\eta_t,\sigma_t),
\]
where $K_t$ is the update gain, $\eta_t$ parametrizes assortativity of the interaction kernel, and $\sigma_t\ge 0$ is the \emph{rating scale applied in period $t$} (implemented operationally by a deterministic normalization of the cross-sectional rating dispersion).
This ``period-by-period scale control'' directly matches the way the main optimal-scaling policy is stated and implemented in the numerics (e.g., $\sigma_t=r_t$).

Our first step is to connect the microscopic particle system to a macroscopic kinetic description.
We prove a mean-field limit and obtain a McKean--Vlasov evolution for the joint law of skill and rating.
Under Gaussian initial conditions and linear update rules, the Gaussian family is invariant and the mean-field dynamics admit an \emph{exact finite-dimensional reduction}: the evolution closes on second moments, yielding a one-dimensional deterministic state variable capturing rating accuracy (the correlation $r_t=\Corr(\rho_t,X_t)$).

This tractability makes it possible to solve the platform-design problem analytically and reveals a sharp structural message.
Once the platform uses the scale control $\sigma_t$ to properly normalize the rating dispersion, the information content of each match becomes \emph{invariant} to how assortative matching is.
Consequently, filtering (choosing $(K_t,\sigma_t)$ to maximize one-step accuracy) and matchmaking (choosing $\eta_t$ to maximize instantaneous utility net of sorting costs) separate.
In the presence of skill drift, we also characterize an entropic ceiling: even under optimal filtering, the long-run accuracy is strictly bounded away from one (the ``Red Queen'' effect), and optimal matchmaking exhibits a cost-driven transition between a disordered regime (near-random matching) and an ordered regime (strict sorting).

All proofs and detailed derivations are provided in the Appendix.
In particular, an \emph{optional} continuous-time diffusion approximation is relegated to Appendix~\ref{app:ct_limit}; it is not used in the discrete-time control results nor in the numerical experiments.

\subsection{Related Literature}
Our paper connects three strands of work: (i) kinetic and mean-field descriptions of interacting-agent systems, including rating dynamics; (ii) statistical models and online algorithms for paired comparisons; and (iii) mean-field control problems with control-dependent interaction kernels.

\subsubsection*{Kinetic/mean-field models and rating dynamics.}
Kinetic and mean-field limits have been extensively studied in socio-economic modeling and collective dynamics; see, e.g., \cite{Cordier2005,During2008,Toscani2006,Carrillo2010,HaTadmor2008}.
In the context of ratings, \cite{JabinJunka2015} derives a kinetic description from Elo-type microscopic updates under exogenous interactions.
Related diffusion/Fokker--Planck descriptions and long-time behavior are analyzed in \cite{During2019,DuringEvansWolfram2024}.
Probabilistic and Markov-chain approaches provide convergence guarantees under simplified settings \cite{Aldous2017,Olesker-Taylor2024,CortezTossounian2024}.
Our contribution differs by explicitly incorporating \emph{time-varying skills} and, crucially, by studying a \emph{platform design} problem in which the interaction kernel (assortativity of matching) is itself chosen optimally.

\subsubsection*{Paired-comparison models and rating algorithms.}
Our model builds on classical paired-comparison foundations \cite{Bradley1952} and Elo-style updates \cite{Elo1978}.
Bayesian systems such as Glicko \cite{Glickman1999,Glickman2001} and TrueSkill \cite{Herbrich2007} represent uncertainty about skill and are designed for non-stationary environments, typically via approximations for scalability.
In contrast, our mean-field approach yields an analytically tractable population-level filtering theory: within the Gaussian regime, the mean-field dynamics close exactly on moments, providing an explicit state evolution for accuracy.

\subsubsection*{Mean-field control with kernel choice.}
Our platform problem lies within mean-field control and mean-field games \cite{LasryLions2007,Huang2006,Bensoussan2013,Carmona2018}, with the distinctive feature that control acts through the \emph{interaction kernel}.
Such problems are typically characterized by HJB equations on spaces of measures.
Here, the linear-Gaussian information structure yields a strong separation: after optimal scaling, learning dynamics become independent of assortativity, reducing the dynamic problem to tractable subproblems.

\subsubsection*{Matchmaking and platform evidence.}
Our analysis also relates to empirical and algorithmic work on matchmaking and engagement in online platforms \cite{Activision2024,Kim2024,Claypool2015,Yuval2019}.
While that literature focuses on performance and user experience, our kinetic framework provides explicit macroscopic predictions---including invariance and a cost-driven transition between ordered and disordered matching regimes---that can serve as a mathematical foundation for platform design questions.

\subsection{Summary of Contributions}
We develop a kinetic/mean-field framework for the analysis and design of large-scale rating and matchmaking systems with drifting skills.
The platform controls both the update rule and the interaction kernel via $u_t=(K_t,\eta_t,\sigma_t)$.
In the Gaussian regime (Gaussian initial conditions), we obtain an exact finite-dimensional reduction and an analytically tractable mean-field control problem.
Our main contributions are:

\begin{itemize}
\item \textbf{Mean-field limit and exact finite-dimensional reduction.}
We derive the mean-field evolution of the joint law of latent skills and observable ratings as a McKean--Vlasov system.
Under Gaussian initial conditions, the Gaussian family is invariant and the moment dynamics close exactly (Proposition~\ref{prop:gaussianity}), yielding a low-dimensional deterministic state dynamics (Theorem~\ref{thm:transition_function}).

\item \textbf{Invariance principle for information acquisition under scale control.}
We prove that when the platform chooses the period-$t$ rating scale in a signal-matched way (operationally, $\sigma_t=r_t$), the one-step accuracy transition becomes independent of assortativity $\eta_t$ (Theorem~\ref{thm:invariance}).
Thus, after proper normalization, assortative matching affects \emph{instantaneous utility} but not \emph{information flow}.

\item \textbf{Separation of filtering and matching.}
Building on invariance, we establish a separation result (Corollary~\ref{cor:separation}): optimal filtering controls $(K_t,\sigma_t)$ are obtained by a greedy step that maximizes next-period accuracy, while optimal matching $\eta_t$ solves a static instantaneous trade-off between match utility and sorting costs.

\item \textbf{Entropic ceiling under drift and a phase transition in optimal matching.}
With drifting skills, we characterize a unique globally stable steady state and show that long-run accuracy is strictly bounded by $\lambda$ (Proposition~\ref{prop:equilibrium}; ``Red Queen'' effect).
Moreover, with a queueing-type sorting cost, the optimal matching policy exhibits a cost-driven transition between a disordered regime (near-random matching) and an ordered regime (strict sorting) (Proposition~\ref{prop:phase_transition}).

\item \textbf{Finite-population validation.}
Particle simulations validate the mean-field predictions, quantify finite-$N$ deviations, and illustrate the invariance-induced data collapse under optimal scaling.
\end{itemize}

\section{Kinetic Modeling and Mean-Field Dynamics}
\label{sec:model}

We consider a large system of $N$ competing agents. The state of the system at time $t \in \mathbb{N}$ is described by the collection of microscopic states $\{( \rho_{i,t}, X_{i,t} )\}_{i=1}^N$, where $\rho_{i,t} \in \mathbb{R}$ denotes the latent skill and $X_{i,t} \in \mathbb{R}$ denotes the observable rating of agent $i$.
For notational convenience, we normalize the initial ratings to zero, i.e., $X_{i,0}=0$ for all $i$.
This makes the initial rating distribution degenerate, so that $\sigma_0=0$ and the correlation-based accuracy $r_t=\mathrm{Corr}(\rho_t,X_t)$ is not defined at $t=0$; we set $r_0:=0$ by convention and apply the correlation-based state description from the first non-degenerate step (typically $t\ge 1$).
Our goal is to derive the macroscopic laws governing the evolution of the joint probability density $f_t(\rho, x)$ in the mean-field limit $N \to \infty$, establishing the dynamic constraints for the control problem in Section \ref{sec:analysis}.

\subsection{Microscopic Particle Dynamics}
\label{sec:micro_dynamics}

The particle system is driven by two mechanisms: intrinsic skill drift and binary interactions organized by the platform.
Throughout, $i\in\{1,\dots,N\}$ indexes agents, $\rho_{i,t}$ denotes latent skill, and $X_{i,t}$ denotes the observable rating.

\paragraph{Intrinsic Skill Dynamics (discrete-time OU).}
Latent skill follows a discrete-time Ornstein--Uhlenbeck dynamics:
\begin{equation}
\label{eq:micro_skill}
    \rho_{i, t+1} = \lambda \rho_{i, t} + \sqrt{1 - \lambda^2}\,\xi_{i, t},
    \qquad
    \xi_{i, t} \sim \mathcal{N}(0, 1),
\end{equation}
where $\lambda\in[0,1]$ is the persistence parameter. This ensures the stationary skill distribution is standard Gaussian
$\mathcal{M}(\rho)=(2\pi)^{-1/2}e^{-\rho^2/2}$.

\paragraph{Interaction Mechanism (scaling, matching, and update).}
The platform specifies three sequences
\[
\{K_t\}_{t\ge 0}\subset(0,\infty),
\qquad
\{\sigma_t\}_{t\ge 0}\subset(0,\infty),
\qquad
\{\eta_t\}_{t\ge 0}\subset[0,1),
\]
which jointly determine the rating dispersion, the matchmaking intensity, and the rating update rule.
Given the cross-sectional rating vector at the beginning of period $t$, the period-$t$ interaction proceeds in three steps.

\medskip
\noindent\textbf{Step 1 (dispersion scaling).}
At the beginning of each period $t$, the platform rescales the ratings to achieve the target dispersion $\sigma_t^2$.
Concretely, given pre-scaled ratings $\widetilde X_{i,t}$ (coming from the previous update), define
\begin{equation}
\label{eq:micro_scaling}
    X_{i,t} = L_t\,\widetilde X_{i,t},
    \qquad
    L_t := \frac{\sigma_t}{\Lambda_t},
    \qquad
    \Lambda_t^2 := \Var(\widetilde X_{i,t}),
\end{equation}
so that $\Var(X_{i,t})=\sigma_t^2$.%
\footnote{One may additionally recenter by subtracting the cross-sectional mean; this does not affect the correlation-based state variable used below.}

\medskip
\noindent\textbf{Step 2 (correlated matching with intensity $\eta_t$).}
Using the scaled ratings $\{X_{i,t}\}$, the platform forms pairs so that matched ratings are positively correlated with strength $\eta_t\in[0,1)$.
The matching procedure is implemented in a finite population as follows.

At time $t$, the population is randomly split into two groups of equal size (Group 1 and Group 2), each preserving the marginal distribution of $X_t$.
For each player $i$ in Group 1, construct a \emph{matching score}
\begin{equation}
\label{eq:micro_match_score}
    Y_{i,t} := \eta_t X_{i,t} + \sqrt{1-\eta_t^2}\,v_{i,t},
    \qquad
    v_{i,t}\sim\mathcal{N}(0,\sigma_t^2)\ \text{independent}.
\end{equation}
Then sort Group 1 by the scores $\{Y_{i,t}\}$ and sort Group 2 by ratings $\{X_{j,t}\}$, and match players by rank (the $k$-th in the $Y$-ordering is paired with the $k$-th in the $X$-ordering).
Equivalently, each player $i$ in Group 1 is matched to a player $j$ in Group 2 whose rating $X_{j,t}$ is closest (in quantile rank) to the target value $Y_{i,t}$.

In the Gaussian regime considered below, $X_{i,t}\sim\mathcal{N}(0,\sigma_t^2)$ and $v_{i,t}$ is independent with the same variance, so the score $Y_{i,t}$ has the same marginal distribution:
\[
Y_{i,t}\sim\mathcal{N}(0,\sigma_t^2).
\]
Hence rank matching is well-defined and produces a coupling of $(X_{i,t},X_{j,t})$ that approximates the idealized mean-field coupling $(X_{i,t},Y_{i,t})$.
In particular, the induced correlation satisfies
\begin{equation}
\label{eq:micro_match_corr}
    \Corr(X_{i,t},X_{j,t}) \approx \Corr(X_{i,t},Y_{i,t}) = \eta_t,
\end{equation}
with the approximation error vanishing as $N\to\infty$ (the mean-field limit).\footnote{In a finite population, exact equality $X_{j,t}=Y_{i,t}$ is typically impossible; rank (quantile) matching provides a canonical discretization that converges to the continuum coupling.}
When $\eta_t=0$, the score is independent of $X_{i,t}$ and matching is effectively random; as $\eta_t\uparrow 1$, matching becomes increasingly assortative, approaching perfectly like-with-like pairing.

\medskip
\noindent\textbf{Step 3 (rating update).}
Given a matched pair $(i,j)$, the platform observes a noisy outcome
\begin{equation}
\label{eq:micro_outcome}
    S_{ij,t} = \rho_{i,t}-\rho_{j,t}+\omega_{ij,t},
    \qquad
    \omega_{ij,t}\sim\mathcal{N}(0,\beta^2),
\end{equation}
and updates \emph{pre-scaled} next-period ratings by the linear rule
\begin{equation}
\label{eq:micro_update_pre}
\begin{cases}
\widetilde X_{i,t+1}
    = X_{i,t} + K_t\Big(S_{ij,t}-(X_{i,t}-X_{j,t})\Big),\\[0.3em]
\widetilde X_{j,t+1}
    = X_{j,t} + K_t\Big(-S_{ij,t}-(X_{j,t}-X_{i,t})\Big).
\end{cases}
\end{equation}
The next period starts by applying Step 1 with $\sigma_{t+1}$, i.e., $X_{t+1}=L_{t+1}\widetilde X_{t+1}$, and the procedure repeats.

\medskip
This formulation makes explicit that $\{\sigma_t\}$ governs the rating dispersion \emph{period by period}, while $\{\eta_t\}$ determines the correlation structure of matched ratings and $\{K_t\}$ controls the strength of the update.
The schematic of this interaction cycle is illustrated in Figure~\ref{fig:interaction_cycle}.
\begin{figure}[htbp]
    \centering
    \includegraphics[width=0.95\textwidth]{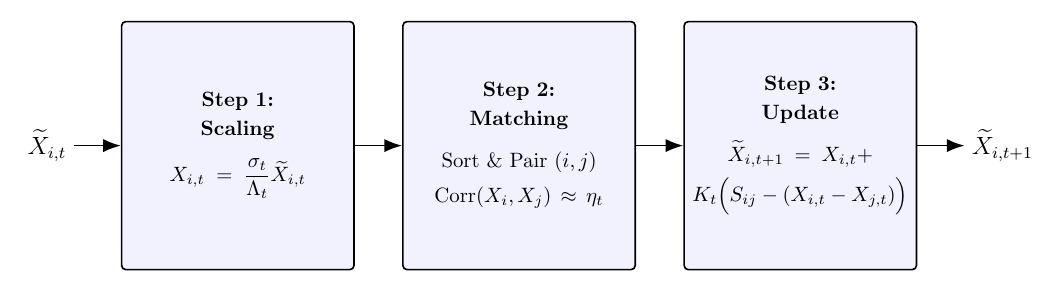}
    \caption{Schematic of the period-$t$ interaction cycle. The platform sequentially applies scaling ($\sigma_t$), matching ($\eta_t$), and updating ($K_t$) to evolve the rating distribution.}
    \label{fig:interaction_cycle}
\end{figure}
The role of scaling in preserving tractability (Gaussian closure and moment dynamics) is formalized in the subsequent mean-field analysis (Theorem~\ref{thm:transition_function}).

\subsection{The Kinetic Equation}
In the mean-field limit $N \to \infty$, the empirical measure converges to a probability density $f_t(\rho, x)$. The evolution of $f_t$ is governed by a Boltzmann-type kinetic equation, involving a transport-diffusion operator $\mathcal{T}$ (skill drift) and a collision operator $\mathcal{Q}$ (rating update).

\paragraph{Matchmaking Kernel.}
For a given matching intensity $\eta_t\in[0,1)$, we define the \emph{matching kernel}
$\mathcal{K}_{\eta_t}(x,x_*)$ as the conditional density of an opponent rating $x_*$ given own rating $x$:
\begin{equation}
\label{eq:matching_kernel}
\mathcal{K}_{\eta_t}(x,x_*)=
\frac{1}{\sqrt{2\pi (1-\eta_t^2)\sigma_t^2}}
\exp\!\left(
-\frac{(x_*-\eta_t x)^2}{2(1-\eta_t^2)\sigma_t^2}
\right),
\end{equation}
where $\sigma_t^2:=\iint x^2 f_t(\rho,x)\,d\rho\,dx=\int x^2 g_t(x)\,dx$ is the current rating second moment
(variance under our zero-mean normalization), and $g_t(x):=\int f_t(d\rho,dx)$ denotes the marginal density of ratings.
We interpret $\mathcal{K}_{\eta_t}(x,\cdot)$ as a \emph{Markov kernel} on ratings: conditional on an agent
with rating $x$, the opponent's rating $X_{*,t}$ is drawn according to $\mathcal{K}_{\eta_t}(x,dx_*)$ as in
\eqref{eq:matching_kernel}.
\emph{Degenerate case.} When $\sigma_t=0$ (in particular, at $t=0$ under our normalization $X_{i,0}=0$),
the Gaussian density representation \eqref{eq:matching_kernel} is not available. In this case we extend
$\mathcal{K}_{\eta_t}(x,\cdot)$ as a possibly singular Markov kernel by setting
$\mathcal{K}_{\eta_t}(x,dx_*)=\delta_{\eta_t x}(dx_*)$, so that $\mathcal{K}_{\eta_0}(0,dx_*)=\delta_0(dx_*)$.
With this extension, the weak formulation \eqref{eq:boltzmann_weak} remains well-defined even when $\sigma_t=0$.
Conditional on $X_{*,t}=x_*$, the opponent's latent skill is drawn from the conditional distribution
$f_t(d\rho_* \mid x_*):=f_t(d\rho_*,dx_*)/g_t(x_*)$ (whenever $g_t(x_*)>0$).
Under the Gaussian closure, $g_t$ is centered Gaussian with variance $\sigma_t^2$, and the kernel in
\eqref{eq:matching_kernel} induces $\mathrm{Corr}(X_t,X_{*,t})=\eta_t$ while preserving the marginal.

\begin{remark}[Admissible range of the matching intensity]
\label{rem:eta_range}
Throughout the paper we restrict the platform's sorting intensity to $\eta_t\in[0,1)$.
The Gaussian \emph{density} representation \eqref{eq:matching_kernel} is non-degenerate only for $|\eta_t|<1$.
The extreme choice $\eta_t=1$ (perfect sorting) is operationally meaningful, and it can be incorporated by extending the kernel as
$\mathcal{K}_{1}(x,dx')=\delta_x(dx')$ (a Dirac kernel). We nevertheless exclude $\eta_t=1$ because our analysis relies on density-based expressions and because the sorting cost satisfies $\lim_{\eta\uparrow 1}C(\eta)=+\infty$ (Section~\ref{subsec:control_problem}).
Finally, negative values $\eta_t\in(-1,0)$ correspond to reverse sorting. Such policies run counter to skill-balancing objectives and are not considered by the platform; accordingly we impose $\eta_t\ge0$ without loss for the design problem studied here.
\end{remark}

\paragraph{Weak Formulation.}
The evolution of $f_{t+1}$ is defined in weak form: for any test function $\phi(\rho,x)$,
\begin{equation}
\label{eq:boltzmann_weak}
\begin{aligned}
\int \phi(\rho,x)\, f_{t+1}(d\rho,dx)
&= \iint f_t(d\rho,dx)
\int \mathcal{K}_{\eta_t}(x,dx_*)
\int f_t(d\rho_* \mid x_*) \\
&\qquad\qquad \times
\mathbb{E}\!\left[\,\phi(\rho',X')\,\middle|\,\rho,x,\rho_*,x_* \right].
\end{aligned}
\end{equation}
Equivalently, writing $g_t(x_*):=\int f_t(d\rho_*,dx_*)$, we have
\[
\int \phi\, df_{t+1}
=
\iiint\!\!\int
\mathbb{E}\!\left[\,\phi(\rho',X')\,\middle|\,\rho,x,\rho_*,x_* \right]\,
\mathcal{K}_{\eta_t}(x,x_*)\,
f_t(\rho,x)\,
\frac{f_t(\rho_*,x_*)}{g_t(x_*)}\,
d\rho\,dx\,d\rho_*\,dx_* .
\]

\subsection{Macroscopic Reduction and Gaussian Closure}
Solving the integro-differential equation \eqref{eq:boltzmann_weak} on the space of measures is computationally intractable for control purposes. However, the Linear-Gaussian structure of our system admits an exact finite-dimensional reduction. \sloppy

\begin{proposition}[Preservation of Gaussianity]
\label{prop:gaussianity}
If the initial density $f_0(\rho, x)$ is a zero-mean bivariate Gaussian, then the solution $f_t(\rho, x)$ to the kinetic equation remains a zero-mean bivariate Gaussian for all $t \ge 0$.
\end{proposition}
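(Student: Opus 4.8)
The plan is to proceed by induction on $t$, showing that the one-step map $f_t \mapsto f_{t+1}$ encoded in the weak formulation \eqref{eq:boltzmann_weak}, together with the dispersion rescaling of Step~1, preserves the class of zero-mean bivariate Gaussian laws. The base case $t=0$ is the hypothesis (with the normalization $X_{i,0}=0$ absorbed by allowing a degenerate Gaussian). For the inductive step, assuming $f_t$ is zero-mean bivariate Gaussian with covariance $\Sigma_t$, I would decompose the transition into three maps and carry Gaussianity through each in turn: (i) the matchmaking coupling producing the joint law of the quadruple $(\rho,X,\rho_*,X_*)$; (ii) the linear skill/rating update \eqref{eq:micro_skill}--\eqref{eq:micro_update_pre} driven by independent Gaussian noise; and (iii) the deterministic linear rescaling \eqref{eq:micro_scaling}.

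The crux is step (i). In the mean-field coupling one draws $(\rho,X)\sim f_t$, then the opponent rating $X_*$ from the Gaussian kernel $\mathcal{K}_{\eta_t}(X,\cdot)=\mathcal{N}(\eta_t X,\,(1-\eta_t^2)\sigma_t^2)$, and finally the opponent skill $\rho_*$ from the Gaussian conditional $f_t(\cdot\mid X_*)=\mathcal{N}(a_t X_*,\,s_t^2)$, where the regression coefficient $a_t$ and residual variance $s_t^2$ are read off from $\Sigma_t$. This exhibits the quadruple as a Markov chain $\rho\to X\to X_*\to\rho_*$ in which every link is a Gaussian conditional whose mean is affine in the conditioning variable and whose variance is free of it. I would then invoke the elementary fact that such a hierarchical construction yields a jointly Gaussian vector (if $Z_1$ is Gaussian and $Z_{k+1}\mid(Z_1,\dots,Z_k)$ is Gaussian with mean affine in $Z_k$ and constant covariance, then $(Z_1,\dots,Z_n)$ is jointly Gaussian), concluding that $(\rho,X,\rho_*,X_*)$ is jointly Gaussian; it is zero-mean because all intercepts vanish. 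When $\sigma_t=0$ the kernel degenerates to $\delta_{\eta_t x}$ as in the text and the quadruple is a degenerate Gaussian, so the argument is unchanged.

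For step (ii), conditional on $(\rho,X,\rho_*,X_*)$ the pair $(\rho',\widetilde X')$ is an explicit affine image of the quadruple plus the independent Gaussian noises $\sqrt{1-\lambda^2}\,\xi$ and $K_t\omega$; since an affine image of a Gaussian vector plus an independent Gaussian is Gaussian, integrating out the conditioning variables leaves $(\rho',\widetilde X')$ jointly zero-mean Gaussian. Step (iii) multiplies the rating coordinate by the deterministic scalar $L_{t+1}=\sigma_{t+1}/\Lambda_{t+1}$, which preserves zero-mean bivariate Gaussianity. Composing (i)--(iii) gives that $f_{t+1}$ is zero-mean bivariate Gaussian, closing the induction. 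The only genuine obstacle is the bookkeeping in step (i): one must check that the opponent-sampling scheme---own and opponent skills conditionally independent given the ratings, and opponent skill tied to opponent rating through the \emph{same} Gaussian regression that $f_t$ induces---produces a jointly Gaussian quadruple and not merely Gaussian marginals. Everything else is routine linear-Gaussian algebra, and the explicit covariance recursions obtained along the way are exactly what feeds into Theorem~\ref{thm:transition_function}.
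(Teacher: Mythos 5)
Your proposal is correct and takes essentially the same approach as the paper: the paper's ``shadow opponent'' coupling, where $\bar X_t'=\eta_t\bar X_t+\sqrt{1-\eta_t^2}\,Z_t$ and $\bar\rho_t'=\tfrac{r_t}{\sigma_t}\bar X_t'+\sqrt{1-r_t^2}\,\zeta_t$ with independent Gaussian noise, is exactly your hierarchical chain $\rho\to X\to X_*\to\rho_*$ with affine conditional means and state-free conditional variances, followed by the same observation that the skill and rating updates together with the rescaling are affine in a jointly Gaussian vector. The only stylistic difference is that you invoke the general lemma on Gaussian hierarchical constructions where the paper writes out the explicit affine representation in auxiliary noises $(Z_t,\zeta_t,\xi_t,\omega_t)$.
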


\begin{proof}
Assume $f_t(\rho,x)$ is a zero-mean (possibly degenerate) bivariate Gaussian.
Let $\sigma_t^2:=\Var(\bar X_t)$ and let $m_t:=\Cov(\bar\rho_t,\bar X_t)$.
If $\sigma_t=0$ (so $\bar X_t\equiv 0$ a.s.), we may simply draw $\bar X_t'\equiv 0$ and take
$\bar\rho_t'\sim\mathcal N(0,1)$ independent of everything below.
Henceforth assume $\sigma_t>0$ and set $r_t:=m_t/\sigma_t=\Corr(\bar\rho_t,\bar X_t)$.
We construct the mean-field interaction using the ``shadow opponent'' coupling (see Appendix~\ref{app:meanfield}).
Given $\bar X_t$, draw
\[
\bar X_t'=\eta_t \bar X_t+\sqrt{1-\eta_t^2}\,Z_t,\qquad
Z_t\sim \mathcal N(0,\sigma_t^2),\quad Z_t\perp \bar X_t,
\]
so that $\Corr(\bar X_t,\bar X_t')=\eta_t$ and $\bar X_t'$ is Gaussian with variance $\sigma_t^2$.
Under the Gaussian hypothesis, the conditional law of skill given rating is linear; hence we may represent the opponent skill as
\[
\bar\rho_t'=\frac{r_t}{\sigma_t}\bar X_t'+\sqrt{1-r_t^2}\,\zeta_t,\qquad
\zeta_t\sim \mathcal N(0,1),
\]
independent of $(\bar X_t,\bar X_t',Z_t)$, which enforces $\Cov(\bar\rho_t',\bar X_t')=r_t\sigma_t$ and $\Var(\bar\rho_t')=1$.
Together with the AR(1) skill update \eqref{eq:micro_skill} and the (mean-field) linear rating update \eqref{eq:mckean_vlasov}, we obtain $(\bar\rho_{t+1},\bar X_{t+1})$ as an affine function of a jointly Gaussian vector
$(\bar\rho_t,\bar X_t,Z_t,\zeta_t,\xi_t,\omega_t)$.
Therefore $(\bar\rho_{t+1},\bar X_{t+1})$ is Gaussian and has mean zero; scaling by the deterministic factor $L_{t+1}$ preserves Gaussianity.
By induction, $f_t$ remains Gaussian for all $t\ge0$.
\end{proof}

This property implies that the infinite-dimensional dynamics of $f_t$ project exactly onto the dynamics of its second moments.
Since the skill variance is fixed at unity, the state is fully characterized by the rating variance $\sigma_t^2$ and the rating accuracy (correlation)
\[
r_t := \sigma_t^{-1} \int \rho x\, f_t(\rho,x)\, d\rho dx , \qquad (t\ge 1),
\]
whenever $\sigma_t>0$.
Under our normalization $X_{i,0}=0$, the initial rating distribution is degenerate and $\sigma_0=0$, so the correlation is not defined at $t=0$.
For notational convenience we set $r_0:=0$, and the analysis of rating accuracy starts from $t\ge 1$ (where $\sigma_t>0$ along any nontrivial trajectory and in the simulations).

We now state the main dynamical constraint that will be used in the optimization problem.

\begin{theorem}[State Transition Function]
\label{thm:transition_function}
The evolution of the rating accuracy $r_t$ is governed by the deterministic map $r_{t+1} = \Psi(r_t, K_t, \eta_t, \sigma_t)$:
\begin{equation}
\label{eq:main_dynamics}
    r_{t+1} = \lambda \cdot \frac{r_t \sigma_t (1 - K_t (1 - \eta_t)) + K_t (1 - \eta_t r_t^2)}{ \sqrt{ \Lambda(r_t, K_t, \eta_t, \sigma_t)^2 } }.
\end{equation}
Here, $\Lambda^2$ represents the pre-scaling variance of the ratings:
\begin{align}
\label{eq:variance_term}
    \Lambda^2 &= \sigma_t^2 \left[ (1-K_t)^2 + K_t^2 + 2K_t(1-K_t)\eta_t \right] \nonumber \\
    &\quad + K_t^2 \left[ \beta^2 + 2(1 - \eta_t r_t^2) \right] \nonumber \\
    &\quad + 2K_t(1-K_t)(1-\eta_t) r_t \sigma_t - 2K_t^2 (1-\eta_t) r_t \sigma_t.
\end{align}
The explicit dependence on $\sigma_t$ indicates that the optimal policy must jointly control the gain and the scale.
\end{theorem}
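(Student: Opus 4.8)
The plan is to carry the Gaussian closure of Proposition~\ref{prop:gaussianity} through a single step of the mean-field dynamics and track how the second-moment pair transforms. Fix a period $t$ with $\sigma_t>0$, and let a representative agent hold state $(\bar\rho_t,\bar X_t)$, jointly zero-mean Gaussian with $\Var(\bar\rho_t)=1$ (the stationary skill variance under \eqref{eq:micro_skill}), $\Var(\bar X_t)=\sigma_t^2$, and $\Cov(\bar\rho_t,\bar X_t)=r_t\sigma_t$. Using the shadow-opponent coupling of Appendix~\ref{app:meanfield}, I would write the matched opponent's rating as $\bar X_t'=\eta_t\bar X_t+\sqrt{1-\eta_t^2}\,Z_t$ with $Z_t\sim\mathcal{N}(0,\sigma_t^2)$, and its skill as $\bar\rho_t'=(r_t/\sigma_t)\bar X_t'+\sqrt{1-r_t^2}\,\zeta_t$ with $\zeta_t\sim\mathcal{N}(0,1)$, where $Z_t,\zeta_t$, the outcome noise $\omega_t\sim\mathcal{N}(0,\beta^2)$, and the skill innovation $\xi_t\sim\mathcal{N}(0,1)$ are mutually independent and independent of $(\bar\rho_t,\bar X_t)$. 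The mean-field (McKean--Vlasov) form of the pre-scaled update \eqref{eq:micro_update_pre} then reads $\widetilde X_{t+1}=(1-K_t)\bar X_t+K_t\bar X_t'+K_t(\bar\rho_t-\bar\rho_t')+K_t\omega_t$, together with $\bar\rho_{t+1}=\lambda\bar\rho_t+\sqrt{1-\lambda^2}\,\xi_t$.

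Second, I would compute the required moments by bilinearity. The coupling gives the elementary identities $\Cov(\bar\rho_t,\bar X_t')=\Cov(\bar X_t,\bar\rho_t')=\eta_t r_t\sigma_t$, $\Cov(\bar\rho_t,\bar\rho_t')=\eta_t r_t^2$, $\Cov(\bar X_t,\bar X_t')=\eta_t\sigma_t^2$, $\Cov(\bar X_t',\bar\rho_t')=r_t\sigma_t$, and $\Var(\bar\rho_t')=1$. Linearity then yields
\[
\Cov(\bar\rho_t,\widetilde X_{t+1})=r_t\sigma_t\bigl(1-K_t(1-\eta_t)\bigr)+K_t(1-\eta_t r_t^2),
\]
and expanding $\Var(\widetilde X_{t+1})$ as $\Var\bigl((1-K_t)\bar X_t+K_t\bar X_t'\bigr)+K_t^2\Var(\bar\rho_t-\bar\rho_t')+K_t^2\beta^2+2\Cov\bigl((1-K_t)\bar X_t+K_t\bar X_t',\,K_t(\bar\rho_t-\bar\rho_t')\bigr)$ and collecting terms reproduces exactly the three-line expression $\Lambda^2$ in \eqref{eq:variance_term}; in particular $\Lambda^2>0$ since $\beta>0$ and $K_t>0$. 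Because $\xi_t$ is independent of all time-$t$ quantities, $\Cov(\bar\rho_{t+1},\widetilde X_{t+1})=\lambda\,\Cov(\bar\rho_t,\widetilde X_{t+1})$ and $\Var(\bar\rho_{t+1})=\lambda^2+(1-\lambda^2)=1$.

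Third, I apply the Step~1 rescaling $\bar X_{t+1}=L_{t+1}\widetilde X_{t+1}$ with $L_{t+1}=\sigma_{t+1}/\Lambda$. Then
\[
r_{t+1}=\frac{\Cov(\bar\rho_{t+1},\bar X_{t+1})}{\sqrt{\Var(\bar\rho_{t+1})\,\Var(\bar X_{t+1})}}=\frac{L_{t+1}\,\lambda\,\Cov(\bar\rho_t,\widetilde X_{t+1})}{L_{t+1}\,\Lambda}=\lambda\,\frac{\Cov(\bar\rho_t,\widetilde X_{t+1})}{\sqrt{\Lambda^2}},
\]
and substituting the numerator computed above gives \eqref{eq:main_dynamics}. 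Note that the target scale $\sigma_{t+1}$ cancels, so $r_{t+1}$ depends only on $(r_t,K_t,\eta_t,\sigma_t)$, whereas the explicit $\sigma_t$-dependence of $\Lambda^2$ is precisely what couples the optimal choices of gain and scale.

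I expect the main obstacle to be not the algebra but the justification, invoked here from Appendix~\ref{app:meanfield}, that in the limit $N\to\infty$ the finite-population rank-matching rule of Step~2 induces exactly the Gaussian shadow-opponent coupling used above, so that the empirical second moments converge to the stated values (a propagation-of-chaos / coupling argument). A secondary point is that the identity is stated on the non-degenerate regime $\sigma_t>0$ (hence, under the normalization $X_{i,0}=0$, for $t\ge1$ along any nontrivial trajectory); at $t=0$ the correlation is undefined, $r_0:=0$ by convention, and one checks directly that a single step from the degenerate state produces $\sigma_1>0$, after which \eqref{eq:main_dynamics} governs the evolution.
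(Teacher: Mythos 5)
Your proposal is correct and follows essentially the same route as the paper's Appendix~\ref{app:variance_derivation}: it invokes the shadow-opponent Gaussian coupling from Appendix~\ref{app:meanfield}, computes the same cross-covariance identities, decomposes $\Var(\widetilde X_{t+1})$ into rating, skill-mismatch, observation-noise, and cross-covariance pieces, and observes that the rescaling factor $L_{t+1}$ cancels so that $r_{t+1}$ depends only on $(r_t,K_t,\eta_t,\sigma_t)$. Your closing remarks on the mean-field justification and the degenerate $t=0$ case match the caveats the paper itself records in Section~\ref{sec:model} and Appendix~\ref{app:propagation_of_chaos}.
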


\begin{proof}
The proof follows from testing \eqref{eq:boltzmann_weak} with $\phi(x) = x^2$ and $\phi(\rho, x) = \rho x$. The expectation terms involve Gaussian integrals over the kernel $\mathcal{K}_\eta$, which yield the algebraic expressions in \eqref{eq:variance_term}. See Appendix \ref{app:variance_derivation} for details.
\end{proof}

\paragraph{Analysis of the Transition Map $\Psi$}
The complexity of the transition function $\Psi$ (and thus the reduced dynamics) is primarily contained within the pre-scaling variance term $\Lambda^2$, which represents the total uncertainty injected into the rating $X$ during a single update step. $\Lambda^2$ is an aggregate of three distinct, independent sources of variance in the unscaled rating $\tilde{X}_{t+1}$:

\begin{enumerate}
    \item \emph{Retention Term}: The variance retained from the previous rating state, $\Var((1-K_t)X_t + K_t X'_t)$, which is modulated by the match correlation $\eta_t$.
    \item \emph{Skill Mismatch Term}: The variance injected by the latent skill difference and mismatch, $\Var(K_t(\rho_t - \rho'_t))$.
    \item \emph{Observation Noise Term}: The variance from the inherent match outcome uncertainty, $\Var(K_t \omega_t)$.
\end{enumerate}

The algebraic expression for $\Lambda^2$ is intricate due to the cross-covariance terms that couple the current rating $X_t$ with the opponent's rating $X'_t$ and skill $\rho'_t$. A rigorous, stepwise derivation detailing the contribution of each component is provided in Appendix \ref{app:variance_derivation}, confirming the explicit form presented in this theorem.

\section{Mean-Field Control and Asymptotic Analysis}
\label{sec:analysis}

Having derived the kinetic description of the system, we now address the \textit{inverse problem}: designing the interaction rules to optimize the system's performance. From a mathematical perspective, this constitutes a Mean-Field Control (MFC) problem. The central planner (platform) seeks to control the evolution of the probability density $f_t$ by adjusting the interaction kernel $\mathcal{K}_{\eta}$ and the post-collision redistribution.

\subsection{The Optimal Control Problem}
\label{subsec:control_problem}
We study the design problem faced by a competitive platform that repeatedly matches agents and updates
ratings. By Gaussian closure, the platform's informational state is fully summarized by the scalar
\emph{rating accuracy} $r_t\in[0,1]$, the correlation between true skill and rating. Hence the original
mean-field control problem reduces to a finite-dimensional control problem on moments with state
variable $r_t$.

\subsubsection{Utility of Match Quality}
The primary goal of a rating system is to facilitate fair competitions. In our mean-field model,
fairness is quantified by the correlation of skills in a matched pair:
\[
\mathbb{E}[\rho\rho'] = \eta_t r_t^2,
\]
where $\eta_t\in[0,1)$ measures the sorting intensity of the matching kernel and $r_t$ measures how
informative ratings are about skills. Empirical evidence from industry and academic studies indicates
that stronger skill balance improves user experience and retention \cite{Activision2024,Kim2024}.
We therefore interpret $\eta_t r_t^2$ as the flow utility from match quality.

\subsubsection{Cost of Sorting (Latency)}
Stricter sorting reduces the set of eligible opponents and increases expected waiting time. This trade-off
is well documented for competitive online games \cite{Claypool2015,Yuval2019}. We model this friction as a
convex queueing cost $C(\eta)$ satisfying
\[
C:[0,1)\to\mathbb{R}_+ \text{ is convex, nondecreasing, and } C(0)=0,\qquad
\lim_{\eta\uparrow 1} C(\eta)=+\infty,
\]
capturing the asymptotic explosion of waiting times under nearly perfect sorting.

\subsubsection{Controls, State Dynamics, and Objective}
We now define \emph{admissibility} of control processes; see the definition below.

\begin{definition}[Admissible policies]
\label{def:admissible}
A policy $\pi=\{(K_t,\sigma_t,\eta_t)\}_{t\ge0}$ is \emph{admissible} if $\sigma_0=0$ and
\[
K_t>0,\qquad \sigma_{t+1}>0,\qquad \eta_t\in[0,1)\qquad\text{for all }t\ge0.
\]
Given $r_0\in[0,1)$, the induced state sequence $\{r_t\}$ is defined recursively by the exact transition
map (Theorem~\ref{thm:transition_function}):
\begin{equation}
\label{eq:reduced_dynamics_31}
r_{t+1} = \Psi(r_t,K_t,\eta_t,\sigma_t).
\end{equation}
\end{definition}

The platform maximizes discounted net welfare:
\begin{equation}
\label{eq:mfc_problem}
J(\pi)=\sum_{t=0}^{\infty}\delta^t\Bigl[\eta_t r_t^2 - C(\eta_t)\Bigr],
\qquad \delta\in(0,1).
\end{equation}

\paragraph{Timing convention.}
At each time $t$, the platform chooses $(K_t,\sigma_t,\eta_t)$.
After the rating update, it rescales the post-update ratings so that the resulting dispersion equals
the chosen target $\sigma_t$ (which becomes the dispersion that enters the kernel for the next matching step).

\begin{remark}[Why non-compactness matters and how we address it]
\label{rem:welldefined_noncompact}
Because $(K_t,\sigma_t)\in(0,\infty)^2$, the admissible set is non-compact. The objective in
\eqref{eq:mfc_problem} is nevertheless well-defined and finite for any admissible policy since
$\eta_t r_t^2\le 1$ and $C(\eta_t)\ge 0$, hence $J(\pi)\le \sum_{t\ge0}\delta^t<\infty$.
The nontrivial issue is whether the optimal choices of $(K_t,\sigma_t)$ are attained despite the
non-compact domain. The next subsection resolves this by showing that the one-step filtering problem
has a unique interior maximizer.
\end{remark}

\subsection{One-Step Filtering, Fisher Information, and the Invariance Principle}
\label{subsec:invariance}
We now isolate the part of the control problem that governs \emph{information acquisition}. Fix the
current accuracy $r\in(0,1)$ and sorting intensity $\eta\in[0,1)$. Consider the one-step filtering problem:
choose $(K,\sigma)\in(0,\infty)^2$ to maximize the next-step accuracy $\Psi(r,K,\eta,\sigma)$.
This step is the only place where non-compactness of the admissible set could, a priori, obstruct existence.

\begin{theorem}[Optimal Filtering and Invariance Principle]
\label{thm:invariance}
Fix $r\in(0,1)$ and $\eta\in[0,1)$. The one-step maximization problem
\begin{equation}
\label{eq:one_step_problem}
\sup_{K>0,\ \sigma>0}\ \Psi(r,K,\eta,\sigma)
\end{equation}
admits a \emph{unique} maximizer $(K^*(r),\sigma^*(r))\in(0,\infty)^2$. It is given by
\begin{equation}
\label{eq:optimal_controls}
K^*(r)=\frac{1-r^2}{2(1-r^2)+\beta^2},\qquad \sigma^*(r)=r.
\end{equation}
Moreover, under this optimal filtering choice, the induced transition becomes \textbf{independent} of
$\eta$ and reduces to the one-dimensional map
\begin{equation}
\label{eq:invariant_map_32}
\Psi\bigl(r,K^*(r),\eta,\sigma^*(r)\bigr)=:\Phi(r)
=\lambda\sqrt{\,r^2+\frac{(1-r^2)^2}{\beta^2+2(1-r^2)}\,}.
\end{equation}
\end{theorem}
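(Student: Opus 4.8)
The plan is to collapse the two–parameter optimization to a transparent scalar problem by changing coordinates so that the matchmaking intensity $\eta$ is completely absorbed. Since $\lambda>0$ is a fixed positive prefactor and (as will be checked a posteriori) the optimizing branch keeps $r_{t+1}>0$, maximizing $\Psi(r,K,\eta,\sigma)$ over $(K,\sigma)$ is equivalent to maximizing $\Psi^2/\lambda^2=N^2/\Lambda^2$, where $N$ is the numerator and $\Lambda^2$ the pre-scaling variance from Theorem~\ref{thm:transition_function}. Dividing numerator and denominator by $K^2$, one checks that $N/(\lambda K)$ and $\Lambda^2/K^2$ depend on $(K,\sigma)$ only through the pair $(p,q):=\bigl((1-K)\sigma/K,\ \sigma\bigr)$, and that $(K,\sigma)\mapsto(p,q)$ is a bijection of $(0,\infty)^2$ onto the open set $\{q>0,\ p>-q\}$ with inverse $\sigma=q$, $K=q/(p+q)$. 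In the ``shadow–opponent'' representation used in Proposition~\ref{prop:gaussianity}, $p$ and $q$ are precisely the loadings of the standardized own rating and opponent rating inside $\widetilde X_{t+1}/K$, so these identities are nothing but the variance/covariance computations already implicit in Theorem~\ref{thm:transition_function}.

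Next I would complete the square to eliminate $\eta$. Set
\[
A:=p+\eta q+r(1-\eta),\qquad B:=\sqrt{1-\eta^2}\,(q-r),
\]
which, for fixed $\eta\in[0,1)$, is an affine bijection of $\mathbb{R}^2$ carrying $\{q>0,\ p>-q\}$ to an open region $\mathcal R_\eta\subset\mathbb R^2$. The key algebraic step is to verify the two identities
\[
\frac{N}{\lambda K}=rA+(1-r^2),\qquad
\frac{\Lambda^2}{K^2}=A^2+B^2+2(1-r^2)+\beta^2 ,
\]
so that, writing $c:=2(1-r^2)+\beta^2>0$, the one-step problem \eqref{eq:one_step_problem} becomes
\[
\sup_{(A,B)\in\mathcal R_\eta}\ h(A,B),\qquad h(A,B):=\frac{\bigl(rA+(1-r^2)\bigr)^2}{A^2+B^2+c}.
\]
Conceptually, $A$ is the ``signal coordinate'' (the loading of $\widetilde X_{t+1}$ on the own-skill direction once the opponent's contribution is folded in) and $B$ is a pure ``nuisance coordinate'' (the residual matchmaking noise $Z_t$), and all of the $\eta$-dependence of $\Psi$ has been rerouted into the change of variables.

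The remaining maximization is elementary. As $\|(A,B)\|\to\infty$ one has $h(A,B)\le(rA+1-r^2)^2/(A^2+c)\to r^2$ at best, so the supremum of $h$ over $\mathbb R^2$ is attained; for fixed $A$, $h$ is strictly decreasing in $B^2$, forcing $B=0$ at any maximizer, and $h(\cdot,0)$ has exactly two critical points, $A=-(1-r^2)/r$ (where $h=0$) and $A^\star:=rc/(1-r^2)>0$, the latter being the strict global maximum with $h(A^\star,0)=r^2+(1-r^2)^2/c>r^2$. Since $(A^\star,0)$ corresponds to $q=r>0$ and $p=A^\star-r>-r=-q$, it lies in $\mathcal R_\eta$; hence it is the unique maximizer of $h$ over $\mathcal R_\eta$, which is exactly where the non-compactness flagged before the theorem is resolved—the optimum is interior. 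Inverting the coordinate changes, $B=0$ gives $\sigma^\star=r$, and $A=A^\star$ then gives $K^\star=q/(p+q)=r/A^\star=(1-r^2)/\bigl(2(1-r^2)+\beta^2\bigr)$, matching \eqref{eq:optimal_controls}. Finally $h(A^\star,0)=r^2+(1-r^2)^2/c$ carries no $\eta$, so $\Phi(r)=\lambda\sqrt{\,r^2+(1-r^2)^2/(\beta^2+2(1-r^2))\,}$, and in fact the whole map $\Psi(r,K^\star,\eta,\sigma^\star)$ is $\eta$-independent, giving \eqref{eq:invariant_map_32}.

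I expect the main obstacle to be the bookkeeping behind the two displayed identities: choosing the linear forms $A,B$ so that the $2\eta pq$ cross term, the $2r(1-\eta)(p-q)$ term, and the $2(1-\eta r^2)$ term in $\Lambda^2/K^2$ collapse \emph{simultaneously} into $A^2+B^2+\mathrm{const}$, and likewise for $N$. Deriving $A,B$ from the Gaussian primitives of Proposition~\ref{prop:gaussianity}—so that they are genuinely orthogonal coordinates of $\widetilde X_{t+1}$—is what makes the cancellation transparent rather than a coincidence; once this is in place, existence, uniqueness, the explicit optimizers, and the $\eta$-invariance all follow at once.
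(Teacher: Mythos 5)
Your proof is correct, and it takes a genuinely different route from the paper. The paper argues by an envelope calculation: for each fixed $\sigma$ it maximizes the ratio of quadratic forms over $K$ in closed form, then shows the resulting upper envelope is a fractional-linear and strictly decreasing function of $(\sigma-r)^2$, pinning down $\sigma^*=r$ and only \emph{then} simplifying away $\eta$. You instead perform the change of variables $(K,\sigma)\mapsto(p,q)=((1-K)\sigma/K,\sigma)\mapsto(A,B)$ at the outset. Checking your two claimed identities against the shadow-opponent construction: writing $\widetilde X_{t+1}/K=\bigl[\tfrac{1-K}{K}+\eta(1-\tfrac r\sigma)+\tfrac r\sigma\bigr]\bar X_t+(1-\tfrac r\sigma)\sqrt{1-\eta^2}\,Z_t+\sqrt{1-r^2}\,\chi_t-\sqrt{1-r^2}\,\zeta_t+\omega_t$ (with $\bar\rho_t=\tfrac r\sigma\bar X_t+\sqrt{1-r^2}\chi_t$ an orthogonal decomposition), the coefficient of $\bar X_t$ is exactly $A/q$ and that of $Z_t$ gives $B$, so the four mutually orthogonal contributions to $\Lambda^2/K^2$ are $A^2$, $B^2$, $2(1-r^2)$, and $\beta^2$; similarly $\mathcal N/K=r(p+\eta q)+1-\eta r^2=rA+(1-r^2)$. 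Both identities check out, the affine map $(p,q)\mapsto(A,B)$ has Jacobian $\sqrt{1-\eta^2}\neq 0$ for $\eta<1$, the image $\mathcal R_\eta$ of $\{q>0,\,p>-q\}$ is open and contains $(A^\star,0)$ since there $q=r>0$ and $p+q=A^\star>0$, and the scalar reduction $h(\cdot,0)$ has the unique interior critical maximizer you state. What your approach buys is conceptual clarity: by choosing $A,B$ as the loadings on the skill-aligned and pure-matchmaking-noise directions, the $\eta$-elimination becomes a structural orthogonality statement rather than an \emph{a posteriori} algebraic coincidence, and existence/uniqueness on the non-compact domain drops out immediately from the coercivity of $h$ and strict monotonicity in $B^2$. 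What the paper's route buys is a closed-form expression for the constrained envelope $\sup_K F_\sigma(K)$ at arbitrary $\sigma$, which may be reusable elsewhere, at the cost of heavier bookkeeping in the $\sigma$-step. Both are complete and rigorous.
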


\begin{proof}[Proof (main steps; full derivations in Appendices~\ref{app:variance_derivation} and \ref{app:invariance_proof})]
Fix $r\in(0,1)$ and $\eta\in[0,1)$. We show that the one-step problem
\eqref{eq:one_step_problem} attains its supremum on the non-compact domain $(0,\infty)^2$,
and that the maximizer is unique and interior.

\medskip
\noindent\textbf{Step 1 (Explicit representation and signal-matching).}
Starting from the explicit transition formula $\Psi(r,K,\eta,\sigma)$ in
Theorem~\ref{thm:transition_function}, write
\[
\Psi(r,K,\eta,\sigma)
=\lambda\,\frac{\mathcal{N}(K,\sigma)}{\sqrt{\Lambda^2(K,\sigma)}},
\]
where $\mathcal{N}$ is the covariance numerator and $\Lambda^2$ is the pre-scaling variance.
The algebraic expressions of $\mathcal{N}$ and $\Lambda^2$ are derived in
Appendix~\ref{app:variance_derivation} and reorganized into a ratio of quadratic forms in
Appendix~\ref{app:invariance_proof}.

Appendix~\ref{app:invariance_proof} further shows that, after optimizing over $K$ for each fixed
$\sigma$, the resulting upper envelope depends on $\sigma$ only through $(\sigma-r)^2$ and is
\emph{strictly decreasing} in $(\sigma-r)^2$. Hence the unique maximizer in the $\sigma$-direction
is
\[
\sigma^*(r)=r,
\]
which is the canonical \emph{signal-matching} scaling.

\medskip
\noindent\textbf{Step 2 (Optimal gain and $\eta$-elimination).}
Under $\sigma=r$, Appendix~\ref{app:invariance_proof} shows that both the numerator and the variance
simplify to
\[
\mathcal{N}(K,r)=r^2+K(1-r^2),
\qquad
\Lambda^2(K,r)=r^2+K^2\bigl(\beta^2+2(1-r^2)\bigr),
\]
which are independent of $\eta$. Therefore, for $\sigma=r$ the transition is independent of $\eta$:
\[
\Psi(r,K,\eta,r)=\lambda\,\frac{r^2+K(1-r^2)}
{\sqrt{r^2+K^2\bigl(\beta^2+2(1-r^2)\bigr)}}.
\]
The right-hand side is differentiable on $(0,\infty)$ and has a unique critical point.
Appendix~\ref{app:invariance_proof} verifies that this critical point is the unique global maximizer on
the non-compact domain $K>0$, and it is given by
\[
K^*(r)=\frac{1-r^2}{2(1-r^2)+\beta^2}>0.
\]
This proves attainment, uniqueness, and interiority of the maximizer $(K^*(r),\sigma^*(r))$.

\medskip
\noindent\textbf{Step 3 (Invariant map).}
Substituting $(K^*(r),\sigma^*(r))$ into $\Psi$ yields
\[
\Psi\bigl(r,K^*(r),\eta,\sigma^*(r)\bigr)
=\lambda\sqrt{\,r^2+\frac{(1-r^2)^2}{\beta^2+2(1-r^2)}\,}
=:\Phi(r),
\]
which is independent of $\eta$ and equals \eqref{eq:invariant_map_32}. This completes the proof.
\end{proof}

\begin{remark}[Interpretation]
\label{rem:invariance_interpretation}
The scaling $\sigma^*(r)=r$ matches the rating dispersion to the current estimation accuracy, preventing
noise amplification in the update step. Under this optimal scaling, the interaction topology $\eta$ affects
\emph{who plays whom} (and thus instantaneous utility) but does not affect \emph{how much information} each
match generates about skills.
\end{remark}

\subsection{Reduced Dynamics and Monotone Value of Accuracy}
\label{subsec:reduced_dynamics}
Under the optimal filtering rule of Theorem~\ref{thm:invariance}, the accuracy dynamics reduce to the
scalar recursion
\[
r_{t+1}=\Phi(r_t),
\]
which is independent of the platform's sorting choice $\eta_t$. This reduction is the key input for the
separation result in the next subsection: it implies that, once the learning engine is optimally calibrated,
choices of $\eta_t$ affect only current-period utility and not future accuracy.

\begin{proposition}[Analysis of the Kinetic Map]
\label{prop:equilibrium}
Let $\lambda \in (0, 1)$ and $\beta^2 > 0$. The map $\Phi: [0, 1] \to (0, \lambda]$ defined in
\eqref{eq:invariant_map_32} satisfies the following properties:
\begin{enumerate}
    \item \emph{Invariance and Monotonicity}: $\Phi$ maps the interval $[0, \lambda]$ into itself and is strictly increasing on $[0, 1]$.
    \item \emph{Existence and Uniqueness}: There exists a unique fixed point $r_\infty \in (0, \lambda)$ satisfying $r_\infty = \Phi(r_\infty)$.
    \item \emph{Global Stability}: For any initial condition $r_0 \in [0, 1)$, the sequence $r_{t+1} = \Phi(r_t)$ converges monotonically to $r_\infty$.
\end{enumerate}
\end{proposition}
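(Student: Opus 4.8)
The plan is to pass to the variable $u=r^2\in[0,1]$ and write $\Phi(r)=\lambda\sqrt{h(u)}$ with
\[
h(u)=u+\frac{(1-u)^2}{\beta^2+2(1-u)},
\]
as read off from \eqref{eq:invariant_map_32}. Since the denominator $\beta^2+2(1-u)$ is bounded below by $\beta^2>0$ on $[0,1]$, $h$ is a smooth positive rational function there, so $\Phi$ is smooth on $[0,1]$ (in particular no issue at $r=0$). Differentiating $h$ — most cleanly after the substitution $v=1-u$, where $h=1-v+v^2/(\beta^2+2v)$ — gives $h'(u)=(\beta^4+2\beta^2 v+2v^2)/(\beta^2+2v)^2$, whose numerator is strictly positive and, compared with $(\beta^2+2v)^2=\beta^4+4\beta^2 v+4v^2$, strictly smaller whenever $v>0$; hence $0<h'(u)\le 1$ on $[0,1]$, with equality only at $u=1$. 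Positivity of $h'$ makes $h$ strictly increasing, so $\Phi$ — a composition of the strictly increasing maps $r\mapsto r^2$, $h$, and $\sqrt{\cdot}$ on the relevant intervals — is strictly increasing on $[0,1]$; and since $h(1)=1$ while $h(0)=1/(\beta^2+2)\in(0,1)$, monotonicity yields $h([0,1])\subseteq(0,1]$, i.e. $\Phi([0,1])\subseteq(0,\lambda]$. In particular $\Phi$ maps $[0,\lambda]\subseteq[0,1]$ into $(0,\lambda]\subseteq[0,\lambda]$, which is part (1).

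For part (2), put $G(r)=\Phi(r)-r$. From $\Phi'(r)=\lambda r\,h'(r^2)/\sqrt{h(r^2)}$ together with $h'(r^2)\le 1$ and $h(r^2)\ge r^2$ (so $\sqrt{h(r^2)}\ge r$), we get $\Phi'(r)\le\lambda<1$ on $[0,1]$, hence $G'=\Phi'-1<0$ and $G$ is strictly decreasing. Since $G(0)=\Phi(0)>0$ and $G(\lambda)=\lambda\bigl(\sqrt{h(\lambda^2)}-1\bigr)<0$ (because $h(\lambda^2)<h(1)=1$), the intermediate value theorem plus strict monotonicity give a unique $r_\infty\in(0,\lambda)$ with $\Phi(r_\infty)=r_\infty$; moreover $G(1)=\lambda-1<0$, so $r_\infty$ is the unique fixed point in all of $[0,1]$. (Equivalently, one can sidestep the square-root derivative: fixed points of $\Phi$ correspond to roots of $F(u):=\lambda^2 h(u)-u$ with $u=r^2$, and $F'=\lambda^2 h'(u)-1\le\lambda^2-1<0$, so $F$ has a unique root in $(0,\lambda^2)$.)

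For part (3), strict monotonicity of $G$ with its unique zero at $r_\infty$ gives $G>0$ on $[0,r_\infty)$ and $G<0$ on $(r_\infty,1)$; combine this with $\Phi$ strictly increasing and $\Phi(r_\infty)=r_\infty$. If $r_0<r_\infty$, then $r_0<\Phi(r_0)<\Phi(r_\infty)=r_\infty$, so $\{r_t\}$ is strictly increasing and bounded above by $r_\infty$; by the monotone convergence theorem its limit $L$ exists, satisfies $L=\Phi(L)$ by continuity, and hence $L=r_\infty$ by uniqueness. If $r_\infty<r_0<1$, then $r_\infty=\Phi(r_\infty)<\Phi(r_0)<r_0$ (the second inequality is $G(r_0)<0$), so $\{r_t\}$ is strictly decreasing and bounded below by $r_\infty$, again converging to $r_\infty$; this case also absorbs any $r_0\in[\lambda,1)$, since $r_\infty<\lambda$ (and $r_1=\Phi(r_0)\le\lambda$). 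If $r_0=r_\infty$ the sequence is constant. In all cases $r_t\to r_\infty$ monotonically.

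The only genuinely computational step is the derivative of $h$ and the inequality $h'(u)\le 1$; everything else is soft analysis (monotone self-maps, the intermediate value theorem, monotone bounded sequences). The points that need care are making sure $\Phi$ is well-defined and continuous up to the endpoints — which reduces to the denominator and $h$ being bounded away from $0$ — and checking that the case analysis in part (3) genuinely covers every $r_0\in[0,1)$, including $r_0=0$ and $r_0\in[\lambda,1)$. I expect the $\sigma$- and $\eta$-free form of $\Phi$ in \eqref{eq:invariant_map_32}, already established in Theorem~\ref{thm:invariance}, to do all the heavy lifting, with no further appeal to the kinetic equation needed.
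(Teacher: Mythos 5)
Your proof is correct, and for the uniqueness step it takes a genuinely different route from the paper. Both proofs open identically: substitute $u=r^2$, differentiate $h(u)=u+\tfrac{(1-u)^2}{\beta^2+2(1-u)}$, and conclude $h'>0$ so $\Phi$ is strictly increasing. (Incidentally, your closed form $h'(u)=\tfrac{\beta^4+2\beta^2v+2v^2}{(\beta^2+2v)^2}$ with $v=1-u$ is the correct one; the paper's displayed intermediate expression for $g'$ has a sign slip in the quotient rule, though its conclusion is unaffected.) Where you diverge is in proving uniqueness of the fixed point: the paper rearranges $r=\Phi(r)$ into a downward-opening quadratic $H(x)=x(1-\lambda^2)[\beta^2+2-2x]-\lambda^2(1-x)^2$ and locates a single root in $(0,\lambda^2)$ by sign checks at $x=0,\,\lambda^2,\,1$. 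You instead extract the sharper bound $h'(u)\le1$ (with equality only at $u=1$), which combined with $h(u)\ge u$ yields $\Phi'(r)\le\lambda<1$ on $[0,1]$, so $G=\Phi-\mathrm{id}$ is strictly decreasing and has exactly one zero; equivalently $F(u)=\lambda^2 h(u)-u$ has $F'\le\lambda^2-1<0$. This is cleaner and more conceptual: it exhibits $\Phi$ as a contraction with Lipschitz constant $\lambda$, which would even let you invoke Banach's fixed point theorem directly (giving existence, uniqueness, and geometric convergence at rate $\lambda$), whereas the paper's polynomial argument is more ad hoc and yields only existence and uniqueness before handing off to the monotone-sequence argument. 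For part (3) the two proofs coincide: strict monotonicity of $\Phi$ plus a unique fixed point gives monotone bounded iterates, hence convergence, with your explicit case split over $r_0<r_\infty$, $r_0=r_\infty$, and $r_0\in(r_\infty,1)$ correctly absorbing $r_0\ge\lambda$.
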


\begin{proof}
We analyze the properties of the function $\Phi(r)$.

\noindent \textit{1. Boundedness and Monotonicity.}
Recall the definition:
\begin{equation*}
    \Phi(r) = \lambda \sqrt{r^2 + \frac{(1 - r^2)^2}{\beta^2 + 2(1 - r^2)}}.
\end{equation*}
At the boundaries, we have $\Phi(0) = \lambda / \sqrt{\beta^2 + 2} > 0$ and $\Phi(1) = \lambda$.
Since the term inside the square root is strictly bounded by 1 for all $r \in [0, 1]$, it holds that $\Phi(r) < \lambda$ for all $r < 1$. Thus, $\Phi$ maps $[0, \lambda]$ into $(0, \lambda]$.
To show monotonicity, let $x = r^2$ and consider the function $g(x) = x + \frac{(1-x)^2}{\beta^2 + 2(1-x)}$ for $x \in [0, 1]$. Differentiating with respect to $x$, we obtain
\begin{equation*}
    g'(x) = 1 - \frac{2(1-x)(\beta^2 + 2(1-x)) + 2(1-x)^2}{(\beta^2 + 2(1-x))^2} = 1 - \frac{2(1-x)[\beta^2 + 3(1-x)]}{(\beta^2 + 2(1-x))^2}.
\end{equation*}
A straightforward algebraic check confirms that $g'(x) > 0$ for $\beta^2 > 0$ and $x \in [0, 1)$. Since $\Phi(r) = \lambda \sqrt{g(r^2)}$, the map $\Phi$ is strictly increasing in $r$.

\noindent \textit{2. Uniqueness of the Fixed Point.}
The fixed point condition $r = \Phi(r)$ is equivalent to $r^2 = \lambda^2 g(r^2)$. Letting $x = r^2$, we seek a solution to $x = \lambda^2 g(x)$ in the interval $(0, \lambda^2)$.
This equation can be rearranged into a polynomial equation. Substituting the expression for $g(x)$:
\begin{equation*}
    x = \lambda^2 \left( x + \frac{(1-x)^2}{\beta^2 + 2 - 2x} \right).
\end{equation*}
Rearranging terms yields:
\begin{equation*}
    x (1 - \lambda^2) [\beta^2 + 2 - 2x] = \lambda^2 (1-x)^2.
\end{equation*}
Define $H(x) = x (1 - \lambda^2) [\beta^2 + 2 - 2x] - \lambda^2 (1-x)^2$.
We observe that $H(x)$ is a quadratic function opening downwards (the coefficient of $x^2$ is $-2(1-\lambda^2) - \lambda^2 = \lambda^2 - 2 < 0$).
Checking the boundary values:
\begin{itemize}
    \item $H(0) = -\lambda^2 < 0$.
    \item $H(\lambda^2) = \lambda^2 (1 - \lambda^2) [\beta^2 + 2 - 2\lambda^2] - \lambda^2 (1-\lambda^2)^2 = \lambda^2 (1-\lambda^2) [\beta^2 + 1 - \lambda^2] > 0$.
\end{itemize}
Since $H(0) < 0$ and $H(\lambda^2) > 0$, and $H$ is continuous, there exists at least one root in $(0, \lambda^2)$. Furthermore, since $H(1) = (1-\lambda^2)\beta^2 > 0$ and the parabola opens downward, the other root must be greater than 1 (or negative, but $H(0)<0$ implies the positive root is unique in the feasible range). Thus, there is exactly one solution $x_\infty \in (0, \lambda^2)$, which implies a unique fixed point $r_\infty = \sqrt{x_\infty} \in (0, \lambda)$.

\noindent \textit{3. Convergence.}
Since $\Phi$ is strictly increasing, continuous, and maps $[0, \lambda]$ into itself with a unique fixed point $r_\infty$, standard results on one-dimensional maps guarantee global stability. Specifically, for any $r_0 \in [0, r_\infty)$, the sequence is strictly increasing and bounded above by $r_\infty$, while for $r_0 \in (r_\infty, 1]$, it is strictly decreasing and bounded below. In both cases, the sequence converges to $r_\infty$.
\end{proof}

\begin{remark}[The Red Queen Effect]
\label{rem:red_queen}
The upper bound $r_\infty<\lambda$ quantifies an entropic ceiling created by skill diffusion:
even under optimal filtering, rating accuracy cannot exceed $\lambda$ because skills continuously
evolve over time. The system must therefore keep updating ratings not to reach perfect information,
but to offset the ongoing dissipation of information.
\end{remark}

Next define the instantaneous net utility and its envelope:
\[
U(r,\eta):=\eta r^2 - C(\eta),\qquad
V(r):=\max_{\eta\in[0,1)} U(r,\eta).
\]

\begin{lemma}[Accuracy has monotone value]
\label{lem:V_increasing}
Assume $C:[0,1)\to\mathbb{R}_+$ is convex, nondecreasing, satisfies $C(0)=0$, and $\lim_{\eta\uparrow 1}C(\eta)=+\infty$.
Then for each $r\in[0,1]$ the maximization problem defining $V(r)$ admits at least one solution
$\eta^*(r)\in[0,1)$, and the envelope $V(r)$ is \emph{nondecreasing} in $r$ on $(0,1]$.
\end{lemma}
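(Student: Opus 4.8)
The plan is to separate the two assertions: (a) for each fixed $r$ the map $\eta\mapsto U(r,\eta)=\eta r^2-C(\eta)$ attains its supremum on $[0,1)$, and (b) the envelope $V$ is nondecreasing. Part (b) will be essentially immediate from the fact that $r$ enters $U$ only through the nonnegative coefficient $r^2$ multiplying $\eta\ge0$, so the real content is part (a), the attainment on the non-compact parameter set $[0,1)$.

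For part (a), fix $r\in[0,1]$. I would first note that $\eta\mapsto U(r,\eta)$ is concave on $[0,1)$ (the linear term $\eta r^2$ plus the concave term $-C(\eta)$, using convexity of $C$) and continuous on $[0,1)$: convexity gives continuity on the open interval $(0,1)$, while writing $\eta_1=(\eta_1/\eta_2)\,\eta_2+(1-\eta_1/\eta_2)\cdot 0$ for $0<\eta_1<\eta_2$ and invoking $C(0)=0$ yields $C(\eta_1)\le(\eta_1/\eta_2)C(\eta_2)$, hence $C(\eta)\to 0=C(0)$ as $\eta\downarrow0$, so $C$ (and thus $U(r,\cdot)$) is continuous on all of $[0,1)$. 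Next, the coercivity hypothesis $\lim_{\eta\uparrow1}C(\eta)=+\infty$ together with the trivial bound $\eta r^2\le1$ gives $U(r,\eta)\to-\infty$ as $\eta\uparrow1$; in particular there is $\eta_0=\eta_0(r)\in(0,1)$ with $U(r,\eta)<0=U(r,0)$ for all $\eta\in(\eta_0,1)$. Consequently $\sup_{\eta\in[0,1)}U(r,\eta)=\max_{\eta\in[0,\eta_0]}U(r,\eta)$, and the right-hand side is attained by continuity on the compact set $[0,\eta_0]$; any such maximizer $\eta^*(r)$ lies in $[0,\eta_0]\subset[0,1)$. (By concavity the maximizing set is in fact a closed subinterval of $[0,1)$, but only existence is needed.) This also justifies the ``$\max$'' in the definition of $V(r)$.

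For part (b), take $0\le r_1\le r_2\le 1$. For every fixed $\eta\in[0,1)$ we have $\eta\ge0$ and $r_1^2\le r_2^2$, hence $U(r_1,\eta)=\eta r_1^2-C(\eta)\le\eta r_2^2-C(\eta)=U(r_2,\eta)$; taking the supremum over $\eta$ preserves the inequality, so $V(r_1)\le V(r_2)$. Thus $V$ is nondecreasing on $[0,1]$, in particular on $(0,1]$. The only step that is not purely mechanical is the attainment claim, and the sole obstacle there—the non-compactness of the admissible set $\eta\in[0,1)$ flagged in Remark~\ref{rem:welldefined_noncompact}—is precisely what the queueing-cost coercivity $\lim_{\eta\uparrow1}C(\eta)=+\infty$ is designed to defeat, by driving $U(r,\cdot)$ to $-\infty$ near the excluded endpoint and reducing the problem to maximization over a compact subinterval. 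I do not anticipate any further difficulty.
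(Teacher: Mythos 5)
Your proof is correct and follows essentially the same strategy as the paper's: coercivity of $C$ near $\eta=1$ reduces the supremum over $[0,1)$ to a maximum over a compact subinterval where it is attained by (upper semi)continuity, and monotonicity of $V$ is a one-line comparison. Your treatment is slightly more explicit on the existence step (you prove full continuity of $C$ on $[0,1)$, including at $\eta=0$ via the convexity bound $C(\eta_1)\le(\eta_1/\eta_2)C(\eta_2)$ with $C(0)=0$, and exhibit a concrete compact window $[0,\eta_0]$), while for monotonicity you use pointwise dominance $U(r_1,\eta)\le U(r_2,\eta)$ whereas the paper evaluates at a maximizer $\eta_1$ of the $r_1$-problem, which also yields the quantitative increment $V(r_2)-V(r_1)\ge\eta_1(r_2^2-r_1^2)$; both are valid and equally elementary.
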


\begin{proof}
Fix $\eta\in[0,1)$. The function $r\mapsto U(r,\eta)=\eta r^2-C(\eta)$ is strictly increasing on $(0,1]$
whenever $\eta>0$. Since $C(\eta)\to+\infty$ as $\eta\uparrow 1$, the maximizer cannot be at $\eta=1$,
and convexity ensures upper semicontinuity of the objective on the compact set $[0,1-\varepsilon]$ for
any $\varepsilon>0$, implying existence of a maximizer $\eta^*(r)\in[0,1)$.
To show strict monotonicity of $V$, take $0<r_1<r_2\le1$. Let $\eta_1\in\arg\max_\eta U(r_1,\eta)$.
Then
\[
V(r_2)\ \ge\ U(r_2,\eta_1)\ =\ U(r_1,\eta_1) + \eta_1(r_2^2-r_1^2)\ =\ V(r_1)+\eta_1(r_2^2-r_1^2) \ge\ V(r_1).
\]
\end{proof}

\begin{remark}[Connection to separation]
\label{rem:why_needed_for_separation}
Lemma~\ref{lem:V_increasing} implies that higher accuracy $r$ strictly increases the best attainable
instantaneous net utility $V(r)$. Together with Proposition~\ref{prop:equilibrium} (which shows that
$\Phi$ is strictly increasing), this monotonicity will allow us to conclude that maximizing $r_{t+1}$
at each step maximizes the entire continuation value, which is the key step behind the separation
result in Corollary~\ref{cor:separation}.
\end{remark}

\subsection{Optimal Control Strategy}
Thanks to the Invariance Principle (Theorem \ref{thm:invariance}), the infinite-horizon optimization problem \eqref{eq:mfc_problem} splits into two tractable static problems.

\begin{corollary}[Optimal separated control policy]
\label{cor:separation}
The following control policy is optimal. For each $t$, given the current state $r_t$:

\begin{enumerate}
    \item \emph{Filtering (Greedy):} Choose $(K_t,\sigma_t)$ to maximize the next-state correlation:
    \begin{equation}
        (K_t^*,\sigma_t^*)
        \in
        \operatorname*{argmax}_{K>0,\,\sigma> 0}\; \Psi(r_t, K_t, \eta_t, \sigma_t),
    \end{equation}
    which is implemented by the Kalman gain and signal-matched scaling in \eqref{eq:optimal_controls}.
    In particular, any maximizer $(K_t^*,\sigma_t^*)$ is optimal and this choice is independent of the cost function $C(\cdot)$ and $\eta_t$.

    \item \emph{Matching (Static):} Choose $\eta_t$ myopically as a maximizer of instantaneous net utility:
    \begin{equation}
        \eta_t^*
        \in
        \operatorname*{argmax}_{\eta \in [0,1)} \left\{ \eta r_t^2 - C(\eta) \right\}.
    \end{equation}
\end{enumerate}
\end{corollary}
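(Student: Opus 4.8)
The plan is to leverage the invariance principle (Theorem~\ref{thm:invariance}) to decouple the discounted criterion \eqref{eq:mfc_problem}: once the filtering pair $(K_t,\sigma_t)$ is chosen greedily, the one-step transition ceases to depend on $\eta_t$, so a choice of $\eta_t$ influences only the current flow utility $\eta_t r_t^2-C(\eta_t)$ and never future accuracy. What remains is then the classical ``monotone continuation value $\Rightarrow$ greedy is optimal'' argument. Concretely, first I would record two pointwise bounds valid along \emph{every} admissible policy $\pi=\{(K_t,\sigma_t,\eta_t)\}_{t\ge0}$ with induced state sequence $\{r_t\}$. By Theorem~\ref{thm:invariance}, $\sup_{K>0,\sigma>0}\Psi(r,K,\eta,\sigma)=\Phi(r)$ for every fixed $\eta\in[0,1)$ and $r\in(0,1)$; combined with a direct evaluation of the transition formula at $r=0$ (which gives $\Psi(0,K,\eta,\sigma)\le\lambda/\sqrt{\beta^2+2}=\Phi(0)$ for all admissible controls, with equality when $\sigma=0$), this yields $r_{t+1}\le\Phi(r_t)$ along every admissible trajectory, and in particular---since $r_0=0$ and $\sigma_0=0$ are fixed in Section~\ref{sec:model}---the degenerate initial step automatically produces $r_1=\Phi(0)$ for every admissible $\pi$. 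Second, from the definition of the envelope, $\eta_t r_t^2-C(\eta_t)\le V(r_t)$, hence $J(\pi)\le\sum_{t\ge0}\delta^t V(r_t)$.

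Next I would compare $\{r_t\}$ with the greedy accuracy trajectory $\widehat r_t:=\Phi^{\circ t}(r_0)$, the $t$-fold iterate of $\Phi$. An immediate induction gives $r_t\le\widehat r_t$ for all $t$: it holds at $t=0$, and if $r_t\le\widehat r_t$ then $r_{t+1}\le\Phi(r_t)\le\Phi(\widehat r_t)=\widehat r_{t+1}$, using the bound above and the strict monotonicity of $\Phi$ (Proposition~\ref{prop:equilibrium}(1)). Since $\widehat r_t\in(0,\lambda]\subset(0,1)$ for $t\ge1$ while $r_t\in[0,1)$, the monotonicity of the envelope $V$ on $(0,1]$ (Lemma~\ref{lem:V_increasing}), together with $V(0)=0\le V(\widehat r_t)$ to absorb any degenerate $r_t$, gives $V(r_t)\le V(\widehat r_t)$ for all $t$, and therefore
\[
J(\pi)\ \le\ \sum_{t\ge0}\delta^t V(r_t)\ \le\ \sum_{t\ge0}\delta^t V(\widehat r_t)\ =:\ \overline J ,
\]
a finite bound since $0\le V\le1$ and $\delta\in(0,1)$, valid for every admissible $\pi$.

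Finally I would verify that the separated policy attains $\overline J$ and is admissible. Under that policy the filtering step uses $(K_t^*,\sigma_t^*)=(K^*(r_t),\sigma^*(r_t))$ from \eqref{eq:optimal_controls}, so by Theorem~\ref{thm:invariance} the realized transition equals $\Phi(r_t)$ \emph{irrespective of $\eta_t$}; hence the state path is exactly $\{\widehat r_t\}$, and (again by that theorem, for $r_t>0$; by direct inspection at $r_t=0$) $(K_t^*,\sigma_t^*)$ is a one-step maximizer, generically the unique one, and independent of $C(\cdot)$ and of $\eta_t$. The matching step then picks any $\eta_t^*\in\operatorname*{argmax}_{\eta\in[0,1)}\{\eta r_t^2-C(\eta)\}$, which exists by Lemma~\ref{lem:V_increasing} and achieves $\eta_t^*(\widehat r_t)^2-C(\eta_t^*)=V(\widehat r_t)$; summing yields $J(\pi^{\mathrm{sep}})=\overline J$, so $\pi^{\mathrm{sep}}$ is optimal, and the same computation shows that \emph{any} greedy filtering maximizer is optimal. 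Admissibility is clear: $K^*(r)>0$ for $r\in[0,1)$; $\sigma^*(\widehat r_t)=\widehat r_t>0$ for $t\ge1$ whereas $\sigma^*(r_0)=\sigma^*(0)=0$ matches the required $\sigma_0=0$; and $\eta_t^*\in[0,1)$. I expect the only substantive ingredient to be the decoupling asserted in the first paragraph---a priori a myopic choice of $\eta_t$ could sacrifice future accuracy, and it is precisely the $\eta$-independence of the greedy-filtered transition (Theorem~\ref{thm:invariance}) that rules this out; everything else is bookkeeping, with minor care needed only at the degenerate initial step $t=0$ where $\sigma_0=0$.
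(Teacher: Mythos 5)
Your proposal is correct and follows essentially the same route as the paper's own proof: bound any admissible trajectory by $r_{t+1}\le\Phi(r_t)$ via Theorem~\ref{thm:invariance}, bound the flow utility by the envelope $V(r_t)$, use monotonicity of $\Phi$ (Proposition~\ref{prop:equilibrium}) and of $V$ (Lemma~\ref{lem:V_increasing}) to compare with the greedy benchmark trajectory, and then verify that the separated policy attains the resulting upper bound. The only difference is that you explicitly check the degenerate initial step $r_0=0$, $\sigma_0=0$ (showing $\Psi(0,K,\eta,0)=\lambda/\sqrt{\beta^2+2}=\Phi(0)$), a point the paper's proof glosses over; this is a welcome refinement but not a structural departure.
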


\begin{proof}
Let $\pi=\{(K_t,\sigma_t,\eta_t)\}_{t\ge 0}$ be any admissible policy and let
$\{r_t\}_{t\ge 0}$ be the induced state sequence under the transition
$r_{t+1}=\Psi(r_t,K_t,\eta_t,\sigma_t)$.
Define the instantaneous net utility and its envelope
\[
U(r,\eta):=\eta r^2-C(\eta),
\qquad
V(r):=\max_{\eta\in[0,1)}U(r,\eta).
\]
Then $U(r_t,\eta_t)\le V(r_t)$ for every $t$.

Next, by Theorem~\ref{thm:invariance}, for every current state $r\in(0,1)$ and any $\eta\in[0,1)$,
\[
\sup_{K>0,\ \sigma>0}\Psi(r,K,\eta,\sigma)=:\Phi(r),
\]
and the supremum is attained at the maximizer $(K^*(r),\sigma^*(r))$ given in \eqref{eq:optimal_controls}.
In particular, the maximizer $(K^*(r),\sigma^*(r))$ depends only on $r$ and is independent of both $C(\cdot)$ and $\eta$.
Therefore, along any admissible policy $\pi$ we have the pointwise bound
\[
r_{t+1}=\Psi(r_t,K_t,\eta_t,\sigma_t)\le \Phi(r_t)\qquad\text{for all }t\ge 0.
\]

Now define the benchmark sequence $\{\bar r_t\}_{t\ge 0}$ by $\bar r_0=r_0$ and
\[
\bar r_{t+1}=\Phi(\bar r_t)\qquad\text{for all }t\ge 0.
\]
Since $\Phi$ is increasing (Proposition~\ref{prop:equilibrium}), the inequality $r_{t+1}\le \Phi(r_t)$ implies by induction that
$r_t\le \bar r_t$ for all $t\ge 0$. Hence, using that $V$ is nondecreasing in $r$ (Lemma~\ref{lem:V_increasing}, with the monotonicity statement),
\[
J(\pi)=\sum_{t=0}^\infty \delta^t U(r_t,\eta_t)
\le \sum_{t=0}^\infty \delta^t V(r_t)
\le \sum_{t=0}^\infty \delta^t V(\bar r_t).
\]

Finally, consider the policy stated in the corollary:
(i) choose $(K_t^*,\sigma_t^*)\in\arg\max_{K,\sigma}\Psi(r_t,K,\eta_t,\sigma)$,
which implements $(K^*(r_t),\sigma^*(r_t))$ from \eqref{eq:optimal_controls} and yields $r_{t+1}=\Phi(r_t)$;
(ii) choose $\eta_t^*\in\arg\max_{\eta\in[0,1)}\{\eta r_t^2-C(\eta)\}$, so that $U(r_t,\eta_t^*)=V(r_t)$.
Under this policy we have $r_t=\bar r_t$ for all $t$ and $U(r_t,\eta_t^*)=V(r_t)$ for all $t$, so the achieved objective equals
\[
\sum_{t=0}^\infty \delta^t V(\bar r_t),
\]
which matches the upper bound above. Therefore the stated separated policy is optimal.
\end{proof}

\begin{remark}[The Principle of Control Separation and its Distinction from LQG]
\label{rem:separation}
This result justifies a modular design for competitive platforms by confirming a strong form of the \emph{Separation Principle}. The overall complexity of the Mean-Field Control problem is dramatically reduced as the two core functions can be treated independently:

\begin{enumerate}
    \item The \emph{Estimation Engine} (Filtering): The control pair $(K_t, \sigma_t)$ is dedicated to maximizing the rate of information acquisition, defining the dynamic map $r_{t+1} = \Phi(r_t)$ (Theorem \ref{thm:invariance}).
    \item The \emph{Matchmaking Engine} (Utility Control): The parameter $\eta_t$ is determined solely by maximizing the instantaneous net utility $U(r_t, \eta_t) = \eta_t r_t^2 - C(\eta_t)$ (Corollary \ref{cor:separation}).
\end{enumerate}

This result differs fundamentally from the classical LQG Separation Principle (Certainty Equivalence). In the classical setting, the optimal control uses the estimated state ($\hat{x}_t$) but the filtering process is independent of the control strategy. In our system, the control variables directly influence the information flow via the variance term $\Lambda^2$. Our Invariance Principle (Theorem \ref{thm:invariance}) reveals a stronger structural separation of control variables: the optimal selection of the update parameters $(K_t^*, \sigma_t^*)$ eliminates the influence of the interaction topology $\eta_t$ on the learning dynamics.

\noindent \emph{Structural Simplification ($\eta$ Elimination):}
Crucially, the Separation Principle implies that the maximization over $\eta_t$ fully decouples from the maximization over the dynamic controls $(K_t, \sigma_t)$ in the Bellman equation. The problem reduces to finding $\eta_t^*$ locally based on the instantaneous cost-benefit trade-off: $\max_{\eta} \{\eta r_t^2 - C(\eta)\}$. This structural simplification effectively eliminates the complexity associated with solving the Dynamic Programming problem over the multi-dimensional control space, making the full optimal control problem analytically tractable.
\end{remark}

\subsection{Thermodynamic Interpretation of the Matching Policy}
\label{sec:thermodynamics}

The optimization problem for the matchmaking intensity $\eta_t$, as derived in Corollary \ref{cor:separation}, takes the form of a static maximization of instantaneous welfare. Let $u_t := r_t^2$ denote the "information potential" (or squared correlation). The problem is written as:
\begin{equation}
    V(u_t) = \sup_{\eta \in [0, 1)} \left\{ \eta u_t - C(\eta) \right\}.
\end{equation}
From the perspective of convex analysis, the value function $V(\cdot)$ is identified as the \emph{Legendre-Fenchel transform} (or convex conjugate) of the cost function $C(\eta)$. This structural observation invites a thermodynamic interpretation of the system's control.
Here, $u_t$ acts as an external field driving the system towards stricter sorting, while the matchmaking intensity $\eta$ plays the role of the order parameter (analogous to magnetization in a spin system). The cost function $C(\eta)$ represents the entropic resistance or the energetic barrier to maintaining order.

\paragraph{Queueing-Theoretic Cost and Critical Phase Transition.}
To derive explicit analytical insights, we introduce a barrier potential cost function. This models the friction of organizing a queue, where waiting times diverge as matching requirements become infinitely strict ($\eta \to 1$). We adopt the form:
\begin{equation}
\label{eq:barrier_cost}
    C(\eta) = \kappa_C \left( \frac{1}{1-\eta} - 1 \right), \quad \kappa_C > 0,
\end{equation}
where $\kappa_C$ represents the characteristic cost parameter (e.g., the ratio of opportunity cost of delay to match utility).
Solving the optimization problem with this specific cost yields a closed-form control law exhibiting critical behavior.

\begin{proposition}[Matchmaking Phase Transition]
\label{prop:phase_transition}
Consider the cost function \eqref{eq:barrier_cost}. The optimal matchmaking policy $\eta^*(r_t)$ is given by:
\begin{equation}
\label{eq:phase_transition}
    \eta^*(r_t) = \max \left( 0, \, 1 - \frac{\sqrt{\kappa_C}}{r_t} \right).
\end{equation}
This policy implies a continuous phase transition at the critical accuracy threshold $r_c = \sqrt{\kappa_C}$.
\begin{itemize}
    \item \emph{Disordered Phase ($r_t \le r_c$):} If the rating information is insufficient ($r_t \le \sqrt{\kappa_C}$), the optimal policy is zero intervention ($\eta^* = 0$). The system operates in a random matching regime to prioritize throughput over quality.
    \item \emph{Ordered Phase ($r_t > r_c$):} Once accuracy exceeds the critical threshold, the optimal intensity becomes positive and strictly increases with $r_t$, asymptotically approaching strict matching ($\eta \to 1$).
\end{itemize}
\end{proposition}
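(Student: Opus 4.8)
The plan is to solve the static problem identified by Corollary~\ref{cor:separation} directly, via first- and second-order conditions. By that corollary, the optimal sorting intensity at state $r_t$ is any maximizer of $\eta\mapsto\eta r_t^2-C(\eta)$ over $[0,1)$, so the whole statement reduces to analyzing this one-variable concave program. Fix $r_t\in(0,1)$ (the degenerate case $r_t=0$ gives $F\le0$ with maximum at $\eta=0$, consistent with the claimed formula in the limiting sense) and set $F(\eta):=\eta r_t^2-\kappa_C\bigl(\tfrac{1}{1-\eta}-1\bigr)$ on $[0,1)$. First I would record that $F'(\eta)=r_t^2-\kappa_C(1-\eta)^{-2}$ and $F''(\eta)=-2\kappa_C(1-\eta)^{-3}<0$ throughout $[0,1)$, so $F$ is strictly concave, and that $F(\eta)\to-\infty$ as $\eta\uparrow1$; hence the supremum over $[0,1)$ is attained at a unique point $\eta^*(r_t)$.

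Next I would locate that point. The first-order condition $F'(\eta)=0$ reads $(1-\eta)^2=\kappa_C/r_t^2$, and taking the positive square root (forced by $1-\eta>0$ and $r_t>0$) yields the unique stationary value $\eta_0=1-\sqrt{\kappa_C}/r_t$. A two-case split on the sign of $\eta_0$ then identifies the maximizer: if $r_t\ge\sqrt{\kappa_C}$ then $\eta_0\in[0,1)$ and strict concavity forces $\eta^*(r_t)=\eta_0=1-\sqrt{\kappa_C}/r_t$; if $r_t<\sqrt{\kappa_C}$ then $F'(0)=r_t^2-\kappa_C<0$, so concavity gives $F'<0$ on all of $[0,1)$, $F$ is strictly decreasing, and the maximizer is the boundary point $\eta^*(r_t)=0$. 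Combining the two cases produces exactly $\eta^*(r_t)=\max\bigl(0,\,1-\sqrt{\kappa_C}/r_t\bigr)$, which is \eqref{eq:phase_transition}.

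Finally, the phase-transition claims follow by inspection of this closed form. With $r_c:=\sqrt{\kappa_C}$, the policy vanishes precisely on $\{r_t\le r_c\}$ (disordered phase); on $(r_c,1)$ we have $\eta^*(r_t)=1-\sqrt{\kappa_C}/r_t>0$ with $\tfrac{d}{dr_t}\eta^*(r_t)=\sqrt{\kappa_C}/r_t^2>0$, hence $\eta^*$ is continuous and strictly increasing there and tends to $1$ as $r_t\to\infty$ (equivalently, it rises monotonically toward its feasible supremum); continuity at $r_c$, where both branches evaluate to $0$, shows the transition is continuous rather than jump-like. The main subtlety — and the only place worth writing out carefully — is the boundary regime $r_t<r_c$: there the optimum is \emph{not} interior, so it cannot be produced by the first-order condition and must be obtained from the global monotonicity of $F$ implied by strict concavity together with $F'(0)<0$. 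Verifying strict concavity on the entire open interval $[0,1)$ (so that the stationary point, when feasible, is automatically the global maximizer) is the other point that must be stated explicitly; both are routine once $F'$ and $F''$ are in hand.
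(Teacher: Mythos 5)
Your proposal is correct and follows essentially the same route as the paper's proof: compute $F'$ and $F''$, use strict concavity of $\eta\mapsto\eta r_t^2-C(\eta)$, locate the stationary point via the first-order condition $(1-\eta)^2=\kappa_C/r_t^2$, and split cases on whether $1-\sqrt{\kappa_C}/r_t$ is nonnegative. The extra care you take (attainment via $F\to-\infty$ as $\eta\uparrow1$, the degenerate $r_t=0$ case, and the remark that within $r_t\in(r_c,1]$ the policy actually approaches $1-\sqrt{\kappa_C}$ rather than $1$) tightens what the paper states loosely but does not change the argument.
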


\begin{proof}
Let $J(\eta) = \eta r_t^2 - C(\eta)$. Since $C(\eta)$ is strictly convex on $[0, 1)$, $J(\eta)$ is strictly concave. We look for the stationary point by solving the first-order condition $J'(\eta) = 0$:
\begin{equation*}
    r_t^2 - C'(\eta) = r_t^2 - \frac{\kappa_C}{(1-\eta)^2} = 0.
\end{equation*}
Solving for $\eta$, we obtain the unconstrained root:
\begin{equation*}
    (1-\eta)^2 = \frac{\kappa_C}{r_t^2} \implies \eta = 1 - \frac{\sqrt{\kappa_C}}{r_t}.
\end{equation*}
We must satisfy the constraint $\eta \in [0, 1)$.
Since $C(\eta) \to \infty$ as $\eta \to 1$, the upper bound is naturally satisfied.
For the lower bound, if $r_t \le \sqrt{\kappa_C}$, then $1 - \sqrt{\kappa_C}/r_t \le 0$. Since $J(\eta)$ is concave and decreasing for $\eta$ beyond the peak, the maximum on the interval $[0, 1)$ occurs at the boundary $\eta = 0$.
Conversely, if $r_t > \sqrt{\kappa_C}$, the root lies in $(0, 1)$. Combining these cases yields the expression \eqref{eq:phase_transition}.
\end{proof}

This result provides a theoretical foundation for the "calibration phase" widely implemented in competitive platforms. It suggests that loose matchmaking (or random matching) is not merely a heuristic for reducing wait times, but the \emph{welfare-maximizing strategy} when the platform lacks sufficient knowledge ($r_t < \sqrt{\kappa_C}$) to justify the cost of sorting. Mathematically, the system exhibits a second-order phase transition, where the order parameter $\eta^*$ leaves zero continuously but with a discontinuous derivative at $r_c$.

\begin{figure}[htbp]
    \centering
    \includegraphics[width=\textwidth]{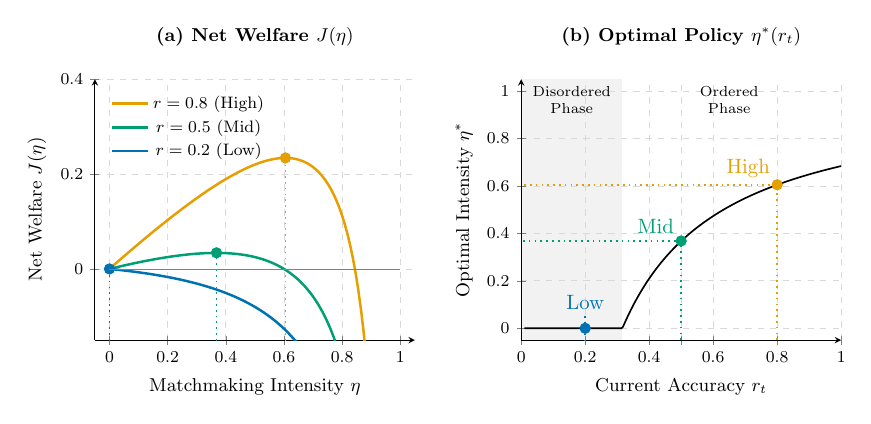}
    \caption{\emph{Matchmaking Accuracy and Optimal Intervention Policy.}
    (a) Net welfare function $J(\eta)$ for varying accuracy levels. Higher accuracy ($r$) increases the marginal benefit of matching, shifting the optimal intensity peak to the right.
    (b) Optimal matchmaking intensity $\eta^*$ as a function of current accuracy $r_t$. The system exhibits a continuous phase transition: below the critical threshold $r_c$ (Disordered Phase), the optimal policy is zero intervention, whereas in the Ordered Phase, the intensity scales with accuracy.}
    \label{fig:optimal_policy}
\end{figure}

\paragraph{Optional continuous-time perspective.}
For readers interested in a diffusion approximation, Appendix~\ref{app:ct_limit} derives a McKean--Vlasov SDE and the associated Fokker--Planck equation.
This continuous-time view is not used in the control results in Section~\ref{sec:analysis} nor in the numerical experiments in Section~\ref{sec:numerics}.

\section{Numerical Validation}
\label{sec:numerics}

In this section, we present numerical investigations to validate the kinetic theory and the control strategies derived in the previous sections. We employ a Monte Carlo particle method to simulate the $N$-particle stochastic system governed by \eqref{eq:micro_skill}--\eqref{eq:micro_update_pre} and compare the empirical statistics with the predictions of the macroscopic mean-field model. Our simulations focus on three aspects: (i) the quantitative rate of convergence of the empirical measure to the mean-field limit, (ii) the asymptotic behavior of rating accuracy in the presence of skill drift, and (iii) the verification of the Invariance Principle (Theorem \ref{thm:invariance}).

\subsection{Convergence to the Mean-Field Limit}
First, we quantify the error between the finite-$N$ particle dynamics and the deterministic map $r_{t+1} = \Psi(r_t, \dots)$ derived in Theorem \ref{thm:transition_function}. This step is crucial to justify the use of the reduced-order model for control design.

\paragraph{Simulation Setup.}
We simulate systems with population sizes varying logarithmically from $N = 10^1$ to $10^5$. The skill persistence is set to $\lambda = 0.99$, and the observation noise variance is $\beta^2 = 1.0$. The control parameters are fixed at a constant gain $K=0.1$, random matching $\eta=0.0$, and unit scaling $\sigma=1.0$.

Because the theoretical normalization $X_{i,0}\equiv 0$ is degenerate (so $\sigma_0=0$ and the correlation-based accuracy is not defined at $t=0$), we start reporting simulation moments from the first non-degenerate period and relabel that time as $t=0$ for convenience.
To isolate dynamical fluctuations from initial sampling noise, we employ a variance reduction technique at this reported initial time: the finite population vectors $\boldsymbol{\rho}_0$ and $\boldsymbol{X}_0$ are normalized and orthogonalized so that the initial empirical moments exactly match the theoretical moments (unit variances and zero correlation, $r_0=0$).
The simulation then proceeds by sequentially applying the interaction update and the skill drift step, consistent with the discrete-time definition.

\paragraph{Results and Scaling Law.}
Figure \ref{fig:convergence} (Left) displays the time evolution of the empirical rating accuracy $r_t^{(N)}$. The stochastic trajectories of the particle system fluctuate around the theoretical mean-field curve $r_t^{(\infty)}$. While finite-size effects are noticeable for small $N$, the trajectories converge rapidly to the deterministic limit as the population size increases.

To quantify the convergence rate, we compute the time-averaged $L^2$ error, defined as $E_N = (\frac{1}{T} \sum_{t=1}^T |r_t^{(N)} - r_t^{(\infty)}|^2)^{1/2}$.
Figure \ref{fig:convergence} (Right) plots $E_N$ against the population size $N$ on a log-log scale.
Numerical regression in the asymptotic range $N \in [10^3, 10^5]$ yields a slope of approximately $-0.51$. This provides strong empirical evidence that the error scales as $N^{-1/2}$, consistent with the Central Limit Theorem. Specifically, the mean squared error follows the scaling law:
\begin{equation}
    \label{eq:mse_scaling}
    \mathbb{E} \left[ | r_t^{(N)} - r_t^{(\infty)} |^2 \right] \propto \frac{1}{N}.
\end{equation}
This result numerically validates the Propagation of Chaos property and justifies the Gaussian closure approximation for analyzing systems with realistic population sizes.

\begin{figure}[htbp]
    \centering
    \includegraphics[width=1.0\textwidth]{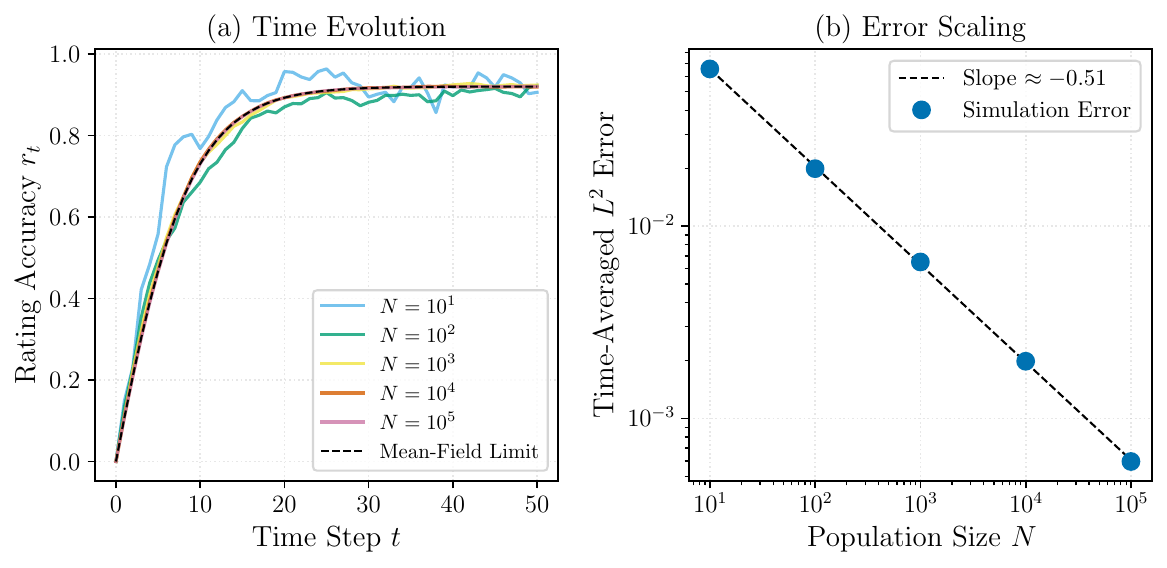}
    \caption{\emph{Convergence to the Mean-Field Limit.} (Left) Time evolution of rating accuracy for varying population sizes $N$. The dashed line represents the theoretical mean-field limit. (Right) Scaling of the time-averaged $L^2$ error with respect to $N$. The error follows the $O(N^{-1/2})$ law characteristic of mean-field approximations, as indicated by the best-fit slope of $-0.51$.}
    \label{fig:convergence}
\end{figure}

\subsection{Entropic Limits and the Red Queen Effect}
Next, we quantitatively analyze the stationary state of the kinetic map to assess the impact of skill non-stationarity. We solve for the fixed point $r_\infty = \Phi(r_\infty)$ assuming the platform employs the optimal Kalman gain $K^*$ derived in Section \ref{sec:analysis}.

Figure \ref{fig:drift_impact} plots the equilibrium accuracy $r_\infty$ as a function of the observation noise variance $\beta^2$ for distinct skill persistence parameters $\lambda \in \{1.0, 0.995, 0.99, 0.95\}$. The results highlight a fundamental dichotomy between static and dynamic environments:

\begin{itemize}
    \item \emph{Conservative Limit ($\lambda=1.0$):} In the static case, the accuracy converges asymptotically to $1.0$ for any finite observation noise. This recovers the classical consistency result of rating systems: with infinite time, the true parameters are perfectly identifiable.
    \item \emph{Dissipative Regime ($\lambda < 1.0$):} In the presence of skill drift, the equilibrium accuracy is strictly bounded by the persistence parameter, i.e., $r_\infty < \lambda$, confirming the theoretical bound in Proposition \ref{prop:equilibrium}. Notably, the system exhibits high sensitivity to the drift parameter. Even a marginal deviation from stationarity (e.g., $\lambda=0.99$) induces a significant drop in achievable accuracy, especially under high noise conditions ($\beta^2 > 1$).
\end{itemize}

This behavior numerically validates the "Red Queen" regime discussed in Remark~\ref{rem:red_queen}. The gap between the curves for $\lambda=1.0$ and $\lambda=0.99$ illustrates that the continuous dissipation of information (entropic drift) imposes a hard ceiling on estimation fidelity, which cannot be overcome by simply accumulating more data.

\begin{figure}[htbp]
    \centering
    \includegraphics[width=0.7\textwidth]{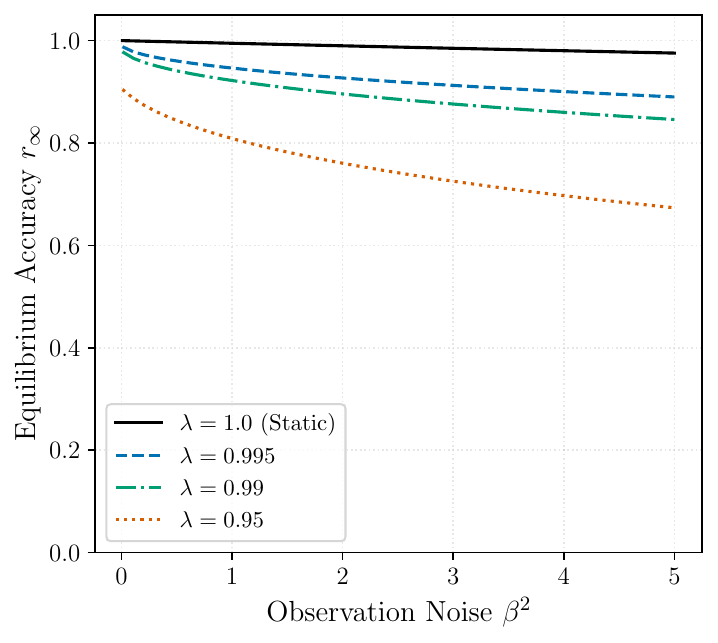}
    \caption{\emph{The Red Queen Effect.} Equilibrium accuracy $r_\infty$ as a function of observation noise $\beta^2$. The dashed line ($\lambda=1.0$) represents the ideal static limit where perfect learning is possible. Solid lines ($\lambda < 1.0$) show the dissipative regime where accuracy is strictly bounded, degrading sharply as the signal-to-noise ratio decreases.}
    \label{fig:drift_impact}
\end{figure}

\subsection{Validation of the Invariance Principle}
Finally, we provide numerical evidence for the Invariance Principle (Theorem \ref{thm:invariance}), which asserts that under optimal scaling, the learning dynamics become independent of the interaction kernel structure. We compare the evolution of rating accuracy starting from an initial state $r_0=0.1$ under two distinct control regimes. 
Here $r_0$ refers to the reported initial time in the simulations, which corresponds to the first non-degenerate step after initialization (cf.~the simulation setup).
We compare:
\begin{enumerate}
    \item \emph{Suboptimal Regime (Fixed Scale):} The rating scale is rigidly fixed at $\sigma_t \equiv 1$, ignoring the current estimation confidence.
    \item \emph{Optimal Regime (Adaptive Scale):} The rating scale is adapted dynamically as $\sigma_t = r_t$, implementing the signal-matched scaling derived in \eqref{eq:optimal_controls}.
\end{enumerate}

In both regimes, we simulate the system under three distinct interaction intensities: Random Matching ($\eta=0$), Intermediate ($\eta=0.5$), and Strict Matching ($\eta=0.9$). Figure \ref{fig:decoupling} presents the results. In the Suboptimal Regime (dashed lines), the trajectories diverge significantly based on $\eta$. Higher matchmaking intensity ($\eta=0.9$) leads to faster convergence. This dependency implies that when the rating scale is not calibrated to the signal quality, the matchmaking mechanism must compensate to control the effective noise variance. In stark contrast, in the Optimal Regime (solid lines), the trajectories for $\eta=0, 0.5, 0.9$ \emph{collapse onto a single universal curve}. This data collapse visually confirms the algebraic cancellation shown in the proof of Theorem \ref{thm:invariance}: the information content of the update is invariant to the assortment $\eta$, provided the macroscopic scale $\sigma$ acts as a proper normalization factor. This result corroborates the Separation Principle (Corollary \ref{cor:separation}), demonstrating that the filtering problem (maximizing $r_t$) can be solved completely independently of the matchmaking policy. \begin{figure}[htbp] \centering \includegraphics[width=0.7\textwidth]{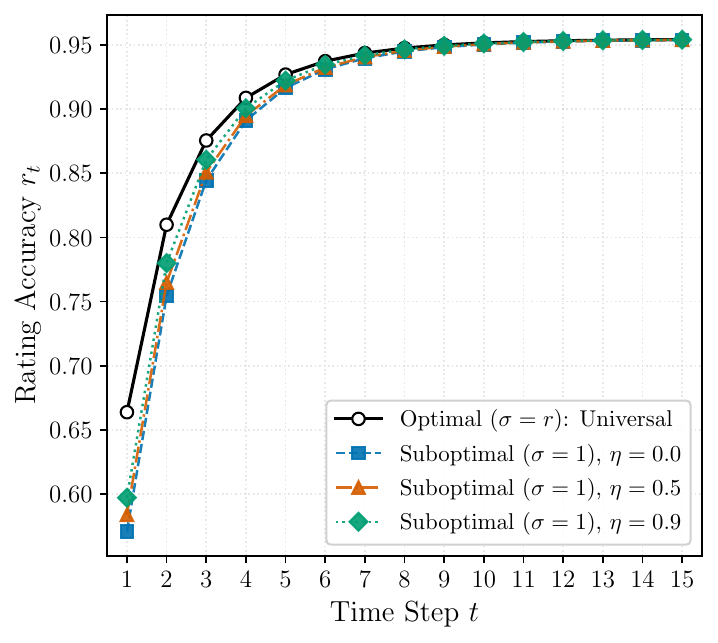} \caption{\emph{Verification of the Invariance Principle.} Dashed lines show trajectories under fixed rating scaling ($\sigma \equiv 1$), where the learning speed depends on the matchmaking intensity $\eta$. Solid lines show trajectories under optimal scaling ($\sigma = r_t$), where all curves collapse onto a single universal trajectory regardless of $\eta$. This demonstrates that under optimal control, the learning dynamics are effectively decoupled from the matchmaking topology.} \label{fig:decoupling} \end{figure}

\section{Conclusion}
\label{sec:conclusion}

We developed a kinetic/mean-field framework for platform design in rating and matchmaking systems with drifting skills.
Starting from a microscopic interacting particle model, we derived a McKean--Vlasov description of the joint skill--rating dynamics and proved an exact Gaussian closure in the Gaussian regime.
This yields a finite-dimensional, analytically tractable state representation for rating accuracy and enables an explicit solution of the associated mean-field control problem when the platform controls
\[
u_t=(K_t,\eta_t,\sigma_t),
\]
where $\sigma_t$ is interpreted operationally as the rating scale applied in period $t$.

Three structural lessons emerge.

\begin{enumerate}
    \item \emph{Entropic limits under skill drift (Red Queen effect).}
    Even under optimal filtering, accuracy cannot converge to one in a non-stationary environment.
    With observation noise $\beta^2>0$ and persistence $\lambda\in(0,1)$, the induced accuracy map admits a unique globally stable fixed point $r_\infty\in(0,\lambda)$, providing a sharp ceiling on long-run learning.

    \item \emph{Invariance of information flow under signal-matched scaling.}
    Proper scale control is decisive.
    When the platform applies signal-matched scaling (implemented as $\sigma_t=r_t$), the one-step accuracy gain becomes independent of assortativity $\eta_t$.
    Thus, assortative matching is not intrinsically required for fast learning: once ratings are correctly normalized, even high-entropy (random) interactions can be information-efficient.

    \item \emph{Separation of filtering and matching, and a welfare phase transition.}
    The invariance yields a separation of platform decisions.
    Filtering controls $(K_t,\sigma_t)$ are determined myopically by maximizing next-period accuracy, while matching $\eta_t$ solves a static instantaneous trade-off between match utility and sorting/latency costs.
    Under a queueing-type barrier cost, the optimal policy exhibits a threshold behavior: when current accuracy is low, random matching is optimal; once accuracy exceeds a critical level, increasingly strict sorting becomes welfare-improving.
\end{enumerate}

Methodologically, the paper illustrates how kinetic-limit tools can convert a high-dimensional interacting-agent algorithm into a low-dimensional control problem with transparent structure.
Substantively, the results point to a modular platform architecture: a properly calibrated \emph{estimation engine} (via scale control) can be designed largely independently of the \emph{matchmaking engine}, which can then be tuned to welfare and latency considerations.

Several extensions are natural.
First, moving beyond the Gaussian regime (e.g., heavy-tailed skills or nonlinear outcome models) would require new closure or approximation schemes.
Second, incorporating strategic behavior (e.g., rating manipulation or effort choice) would lead to a mean-field game formulation and raise incentive-design questions.
Finally, while our main analysis is discrete-time and self-contained, the Appendix provides an optional diffusion approximation (Appendix~\ref{app:ct_limit}) that may be useful for readers who prefer a continuous-time perspective; it is not used in the main control results.

\appendix

\section{Derivation of the Mean-Field Limit and Gaussian Closure}
\label{app:meanfield}

In this appendix, we provide the mathematical justification for the macroscopic dynamics derived in Section \ref{sec:model}. We explicitly formulate the mean-field limit as a McKean-Vlasov type process, justify the Gaussian closure property, and provide the detailed algebraic derivation of the variance dynamics (Theorem \ref{thm:transition_function}).

\subsection{The McKean-Vlasov Process Construction}
Let $(\bar{\rho}_t, \bar{X}_t)$ denote the state of a typical particle in the mean-field limit. Unlike the $N$-particle system where agents interact with specific peers, the typical particle interacts with a "shadow opponent" drawn from the conditional law of the population.

The skill dynamics are governed by the independent AR(1) process:
\begin{equation}
    \bar{\rho}_{t+1} = \lambda \bar{\rho}_t + \sqrt{1-\lambda^2} \xi_t, \quad \xi_t \sim \mathcal{N}(0,1).
\end{equation}

For the rating update step, we construct the shadow opponent's state $(\bar{\rho}'_t, \bar{X}'_t)$ to satisfy the statistical constraints imposed by the interaction kernel $\mathcal{K}_\eta$.
Given the agent's rating $\bar{X}_t$, the opponent's rating $\bar{X}'_t$ is generated via the coupling:
\begin{equation}
    \bar{X}'_t = \eta_t \bar{X}_t + \sqrt{1-\eta_t^2} Z_t,
\end{equation}
where $Z_t$ is an independent copy of $\bar{X}_t$ (i.e., $Z_t \sim \mathcal{N}(0, \sigma_t^2)$ and $Z_t \perp \bar{X}_t$). This construction ensures the required correlation $\Corr(\bar{X}_t, \bar{X}'_t) = \eta_t$.

Crucially, the opponent's skill $\bar{\rho}'_t$ is not independent but is drawn from the conditional distribution $f_t(\rho | x = \bar{X}'_t)$. Under the Gaussian ansatz, this is realized by the linear projection:
\begin{equation}
    \bar{\rho}'_t = \frac{r_t}{\sigma_t} \bar{X}'_t + \sqrt{1-r_t^2} \zeta_t,
\end{equation}
where $\zeta_t \sim \mathcal{N}(0, 1)$ is independent noise. This ensures $\Cov(\bar{\rho}'_t, \bar{X}'_t) = r_t \sigma_t$.

The continuous-state update rule is then given by the discrete-time McKean-Vlasov equation:
\begin{equation}
\label{eq:mckean_vlasov}
    \bar{X}_{t+1} = L_{t+1} \left[ \bar{X}_t + K_t \left( (\bar{\rho}_t - \bar{\rho}'_t) + \omega_t - (\bar{X}_t - \bar{X}'_t) \right) \right],
\end{equation}
where $\omega_t \sim \mathcal{N}(0, \beta^2)$ is the observation noise.

\subsection{Continuous-Time Diffusion Approximation (Optional)}
\label{app:ct_limit}

This subsection provides an optional continuous-time diffusion approximation of the discrete-time mean-field dynamics.
This perspective is not used elsewhere in the paper.

While the discrete-time formulation in Section~\ref{sec:model} captures the operational mechanics, analytically characterizing the global distribution benefits from the tools of stochastic calculus. By taking the continuous-time limit of the microscopic update rules, we validate the structural assumptions made in Section~\ref{sec:model}, specifically proving that the Gaussian closure is exact in the linear regime.

\subsubsection{Diffusion-limit McKean--Vlasov System}
We consider the high-frequency limit where interaction events occur at a rate $1/\Delta t$.
\paragraph{Continuous-time scaling.}
To take the continuous-time limit, we interpret the discrete-time model as a $\Delta t$-time-step
approximation with physical time $s=n\Delta t$. Accordingly, the control parameters may depend on
$\Delta t$, and we write $K_{\Delta t}$ and $\beta_{\Delta t}^2$.
We impose the diffusive scaling
\begin{equation}
\label{eq:ct_scaling}
\frac{K_{\Delta t}}{\Delta t}\to \kappa
\qquad\text{and}\qquad
\frac{\beta_{\Delta t}^2}{\Delta t}\to \Sigma_M^2
\qquad\text{as }\Delta t\downarrow 0,
\end{equation}
equivalently $K_{\Delta t}=\kappa\,\Delta t+o(\Delta t)$ and $\beta_{\Delta t}^2=\Sigma_M^2\,\Delta t+o(\Delta t)$.

Let $(\rho_t, X_t)$ denote the state of a representative agent in the mean-field limit. The dynamics are governed by the following system of coupled stochastic differential equations (SDEs):

\begin{equation}
\label{eq:coupled_sde}
\begin{cases}
    d\rho_t = -\theta \rho_t \, dt + \sigma_\rho \, dB_t, \\
    dX_t = \kappa (\rho_t - X_t) \, dt + \Sigma(\mu_t) \, dW_t,
\end{cases}
\end{equation}
where $B_t$ and $W_t$ are independent Wiener processes.
Here, $\theta \approx 1-\lambda$ represents the skill mean-reversion rate, and $\kappa$ is the continuous-time learning rate.

Crucially, the diffusion coefficient $\Sigma(\mu_t)$ for the rating process is determined by the mean-field interaction and defined by:
\begin{equation}
\label{eq:sigma_fpe_full}
\Sigma(\mu_t)^2
= \kappa^2\Big(\Sigma_M^2+\Var\!\big(\rho'_t-X'_t \,\big|\, X_t\big)\Big),
\end{equation}

\begin{lemma}[State-independence of the diffusion coefficient under Gaussian coupling]
\label{lem:sigma_state_indep}
Assume the Gaussian regime of Proposition~\ref{prop:gaussianity} so that $(\rho_t,X_t)$ is bivariate Gaussian with moments $(r_t,\sigma_t)$.
Generate the opponent by the linear coupling
\[
X_t'=\eta_t X_t+\sqrt{1-\eta_t^2}\,Z_t,\qquad
Z_t\sim\mathcal N(0,\sigma_t^2),\quad Z_t\perp X_t,
\]
and
\[
\rho_t'=\frac{r_t}{\sigma_t}X_t' + \sqrt{1-r_t^2}\,\zeta_t,\qquad
\zeta_t\sim\mathcal N(0,1),\quad \zeta_t\perp (X_t,X_t',Z_t).
\]
Then $\Var(\rho_t'-X_t'\mid X_t)$ is constant (does not depend on $X_t$) and equals
\begin{equation}
\label{eq:cond_var_rhop_xp}
\Var(\rho_t'-X_t'\mid X_t)
=(1-r_t^2) + (1-\eta_t^2)\,(\,r_t-\sigma_t\,)^2.
\end{equation}
Consequently, $\Sigma(\mu_t)$ depends on $\mu_t$ only through the global moments $(r_t,\sigma_t)$ and is independent of the local state $X_t$.
\end{lemma}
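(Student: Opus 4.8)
The plan is to compute the conditional law of $\rho_t'-X_t'$ given $X_t$ directly from the two linear representations in the hypothesis, exploiting that the auxiliary noises $Z_t$ and $\zeta_t$ are independent of $X_t$. First I would substitute the definition of $\rho_t'$ to obtain the affine representation
\[
\rho_t'-X_t'=\Bigl(\frac{r_t}{\sigma_t}-1\Bigr)X_t'+\sqrt{1-r_t^2}\,\zeta_t
=\frac{r_t-\sigma_t}{\sigma_t}\,X_t'+\sqrt{1-r_t^2}\,\zeta_t ,
\]
so that the quantity of interest is an affine function of $X_t'$ plus an independent Gaussian term. Since $\zeta_t\perp(X_t,X_t',Z_t)$, conditioning on $X_t$ leaves $\zeta_t\sim\mathcal N(0,1)$ independent of $X_t'$, so the $\zeta_t$-term contributes exactly $1-r_t^2$ to the conditional variance regardless of the value of $X_t$.

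Next I would analyze $X_t'$ conditionally on $X_t$. From $X_t'=\eta_t X_t+\sqrt{1-\eta_t^2}\,Z_t$ with $Z_t\perp X_t$ and $\Var(Z_t)=\sigma_t^2$, the conditional law of $X_t'$ given $X_t$ is Gaussian with a \emph{state-dependent} mean $\eta_t X_t$ but a \emph{state-independent} variance $(1-\eta_t^2)\sigma_t^2$. Hence the $X_t'$-term contributes $\bigl(\tfrac{r_t-\sigma_t}{\sigma_t}\bigr)^2(1-\eta_t^2)\sigma_t^2=(1-\eta_t^2)(r_t-\sigma_t)^2$, and adding the two independent contributions yields exactly \eqref{eq:cond_var_rhop_xp}. (Alternatively, one may observe that $(\rho_t,X_t,Z_t,\zeta_t,X_t',\rho_t')$ is jointly Gaussian by construction, so the conditional variance of the linear functional $\rho_t'-X_t'$ given $X_t$ is automatically a deterministic constant; the computation above only serves to identify its value.) The single point that requires care — and the closest thing to an obstacle — is the observation that the dependence on $X_t$ enters solely through the conditional mean of $X_t'$ and therefore drops out of the conditional variance; there is no genuine analytic difficulty, only the bookkeeping of the independence relations among $X_t$, $Z_t$, and $\zeta_t$.

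Finally, for the stated consequence I would substitute \eqref{eq:cond_var_rhop_xp} into the defining formula \eqref{eq:sigma_fpe_full} to get
\[
\Sigma(\mu_t)^2=\kappa^2\Bigl(\Sigma_M^2+(1-r_t^2)+(1-\eta_t^2)(r_t-\sigma_t)^2\Bigr),
\]
which depends on the mean-field law $\mu_t$ only through the second-moment pair $(r_t,\sigma_t)$ (together with the control $\eta_t$) and not on the local state $X_t$. This is precisely what makes the rating SDE in \eqref{eq:coupled_sde} a bona fide McKean--Vlasov equation with a moment-dependent, rather than position-dependent, diffusion coefficient.
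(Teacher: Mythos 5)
Your proposal is correct and follows essentially the same route as the paper: rewrite $\rho_t'-X_t'$ as an affine function of $X_t'$ plus an independent $\zeta_t$-term, observe that $\Var(X_t'\mid X_t)=(1-\eta_t^2)\sigma_t^2$ is state-independent, and add the two contributions. The parenthetical remark that joint Gaussianity alone already forces the conditional variance to be constant is a nice redundancy check but not a different argument.
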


\begin{proof}
Write $\rho_t'-X_t'=\left(\frac{r_t}{\sigma_t}-1\right)X_t' + \sqrt{1-r_t^2}\,\zeta_t$.
Conditional on $X_t$, the variable $X_t'$ is Gaussian with variance $\Var(X_t'\mid X_t)=(1-\eta_t^2)\sigma_t^2$, which is independent of $X_t$.
Independence of $\zeta_t$ yields
\[
\Var(\rho_t'-X_t'\mid X_t)
=\left(\frac{r_t}{\sigma_t}-1\right)^2(1-\eta_t^2)\sigma_t^2+(1-r_t^2),
\]
which simplifies to \eqref{eq:cond_var_rhop_xp}.
\end{proof}

By Lemma~\ref{lem:sigma_state_indep}, the diffusion coefficient in \eqref{eq:sigma_fpe_full} is \emph{state-independent} (additive noise), which is a key ingredient behind the exact Gaussian closure in the continuous-time limit.

\paragraph{Addressing Multiplicative Noise Concern.}
Equation \eqref{eq:sigma_fpe_full} defines $\Sigma(\mu_t)$ through the conditional variance
$\Var(\rho'_t-X'_t\mid X_t)$.
Lemma~\ref{lem:sigma_state_indep} shows that, under the Gaussian coupling construction,
this conditional variance is \emph{constant} and does not depend on the local state $X_t$.
Therefore, $\Sigma(\mu_t)$ depends on $\mu_t$ only through global moments (e.g., $(r_t,\sigma_t)$),
and the rating SDE in \eqref{eq:coupled_sde} has \emph{additive} (state-independent) noise.
This rules out multiplicative-noise effects and is the key structural reason why Gaussianity is preserved exactly
in the continuous-time limit.

\subsubsection{The Fokker-Planck Equation}
The time evolution of the joint probability density $f(t, \rho, x)$ is governed by the associated Fokker-Planck equation (or Kolmogorov forward equation):
\begin{equation}
\label{eq:fp}
    \frac{\partial f}{\partial t} =  \underbrace{\theta \frac{\partial}{\partial \rho} (\rho f) + \frac{\sigma_\rho^2}{2} \frac{\partial^2 f}{\partial \rho^2}}_{\text{Skill Diffusion}}  
    + \underbrace{\kappa \frac{\partial}{\partial x} ((x - \rho) f) + \frac{\Sigma(\mu_t)^2}{2} \frac{\partial^2 f}{\partial x^2}}_{\text{Rating Update}}.
\end{equation}
This Partial Differential Equation (PDE) encapsulates the competition between the intrinsic entropy of skill drift (first line) and the information gain from rating updates (second line).

\subsubsection{Theoretical Justification for Gaussian Closure}
A fundamental property of SDE systems is that linearity of the drift combined with state-independent (additive) Gaussian noise ensures the preservation of Gaussianity.
Since the drift terms in the McKean-Vlasov system \eqref{eq:coupled_sde} are linear in the state variables $(\rho, X)$ and the diffusion term $\Sigma(\mu_t)$ is distribution-dependent but \emph{state-independent} (additive noise; see Lemma~\ref{lem:sigma_state_indep}), the system constitutes a multi-dimensional Ornstein-Uhlenbeck process with time- or distribution-varying parameters.

\begin{proposition}[Exactness of Gaussian Closure]
\label{prop:gaussianity_ct}
If the initial joint distribution $f(0, \rho, x)$ is Gaussian, the solution $f(t, \rho, x)$ to the Fokker-Planck equation \eqref{eq:fp} remains Gaussian for all $t \ge 0$. Furthermore, the long-time stationary solution $f_\infty(\rho, x)$ is a unique bivariate Gaussian density.
\end{proposition}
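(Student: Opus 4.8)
The plan is to collapse the (nonlinear) McKean--Vlasov Fokker--Planck equation \eqref{eq:fp} to a finite-dimensional ODE for the covariance matrix, exploiting that the drift of \eqref{eq:coupled_sde} is linear in $(\rho,x)$ and that, by Lemma~\ref{lem:sigma_state_indep}, the diffusion $\Sigma(\mu_t)^2$ is state-independent and depends on $\mu_t$ only through its second moments. Concretely, I would posit the Gaussian ansatz $f(t,\cdot)=\mathcal N(m_t,P_t)$, substitute it into \eqref{eq:fp} (equivalently, apply It\^o's formula to \eqref{eq:coupled_sde}), and read off the closed system $\dot m_t = A m_t$ together with the matrix ODE
\[
\dot P_t = A P_t + P_t A^\top + D(P_t), \qquad A=\begin{pmatrix}-\theta & 0\\ \kappa & -\kappa\end{pmatrix},\qquad D(P)=\operatorname{diag}\bigl(\sigma_\rho^2,\ \Sigma(\mu)^2\bigr),
\]
where, via Lemma~\ref{lem:sigma_state_indep}, $\Sigma(\mu)^2$ is an explicit rational function of the entries of $P$ on the open cone $\{P\succ 0\}$. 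Since $m_0=0$ forces $m_t\equiv 0$, only $P_t$ matters, and the nonlinearity of \eqref{eq:fp} has been isolated into a single scalar feedback term $\Sigma(\mu)^2$.

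For the \emph{propagation of Gaussianity}, I would argue as follows. The map $P\mapsto AP+PA^\top + D(P)$ is locally Lipschitz on $\{P\succ 0\}$ (its only nonlinearity enters through $\Sigma(\mu)^2$, whose denominators stay bounded away from zero there), so the moment ODE has a unique maximal solution $P_t$ from $P_0\succ 0$; positivity is preserved because the diffusion keeps $P$ off the boundary of the cone, and since $A$ is Hurwitz while $\Sigma(\mu)^2$ grows at most linearly in the entries of $P$, a Gr\"onwall bound rules out finite-time blow-up, giving a global solution. Then $\widehat f(t,\cdot):=\mathcal N(0,P_t)$ solves \eqref{eq:fp} with the prescribed Gaussian initial datum. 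To conclude $f=\widehat f$, I would invoke well-posedness of the McKean--Vlasov problem in its ``moments-first'' form: the covariance trajectory $t\mapsto P_t$ is determined by the ODE alone, and once it is frozen, \eqref{eq:coupled_sde} is a time-inhomogeneous \emph{linear} SDE with additive noise, whose law is the classical Gaussian flow; uniqueness for this linear problem then pins down $f$, so $f(t,\cdot)$ is Gaussian for every $t\ge 0$.

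For the \emph{stationary state}, I would first show that the covariance of \emph{any} stationary solution with finite second moments must solve the algebraic equation $AP_\infty + P_\infty A^\top + D(P_\infty)=0$ (integrate the stationary form of \eqref{eq:fp} against $\rho^2$, $\rho x$, $x^2$). The $(\rho,\rho)$ and $(\rho,x)$ components decouple and are pinned linearly: $(P_\infty)_{11}=\sigma_\rho^2/(2\theta)$ and $(P_\infty)_{12}=\kappa (P_\infty)_{11}/(\theta+\kappa)$. Substituting these leaves a single scalar equation for $(P_\infty)_{22}$ which, after clearing denominators, is a low-degree polynomial; an intermediate-value argument on the admissible range together with monotonicity of the associated one-step map---structurally the same computation as in Proposition~\ref{prop:equilibrium}---yields a unique admissible root. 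Hence the stationary FPE reduces to a \emph{linear} Fokker--Planck equation with coefficients fixed by $P_\infty$, whose unique invariant density is $f_\infty=\mathcal N(0,P_\infty)$. Global convergence $f(t,\cdot)\to f_\infty$ then follows from the corresponding statement for the scalar Riccati flow: since $A$ is Hurwitz and the nonlinearity enters only through the scalar, monotone term $\Sigma(\mu)^2$, the flow $P_t$ is monotone/contractive on $\{P\succ 0\}$ and converges to $P_\infty$ from every admissible initial covariance, reusing the monotone-convergence argument of Proposition~\ref{prop:equilibrium}.

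The main obstacle is conceptual rather than computational: because $\Sigma(\mu_t)^2$ feeds back on the solution's own covariance, \eqref{eq:fp} is genuinely nonlinear, so the Gaussian ansatz must be shown to be the \emph{only} solution, not merely \emph{a} solution. The decoupling trick---solve the closed covariance ODE first, then treat \eqref{eq:coupled_sde} as a bona fide linear SDE---is what makes this rigorous, and it is the step I would present most carefully; the remaining work (global existence of the Riccati flow, positivity, and uniqueness/attractivity of its fixed point) is standard once that reduction is in place.
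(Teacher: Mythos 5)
Your proposal is correct and follows the same underlying idea the paper invokes: Lemma~\ref{lem:sigma_state_indep} makes the diffusion coefficient state-independent, so \eqref{eq:coupled_sde} is a linear (time/distribution-inhomogeneous) Ornstein--Uhlenbeck system, and linear drift plus additive Gaussian noise preserves Gaussianity. The paper only sketches this in prose without a formal proof, and your ``moments-first'' decoupling (solve the closed covariance ODE, then freeze it and treat the SDE as genuinely linear to get uniqueness of the McKean--Vlasov flow) together with the Lyapunov-equation computation for $P_\infty$ is exactly the detail the paper leaves implicit.
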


This proposition provides the rigorous justification for the dimension reduction employed in Section~\ref{sec:analysis}. It confirms that the Gaussian ansatz is not merely an approximation but an \emph{exact solution} to the kinetic equations in the continuous-time limit under our linear-Gaussian assumptions. The stationary variance of the rating conditional on skill, $\text{Var}(X_\infty | \rho)$, is analytically given by the balance between the diffusion intensity and the restoring force: $\Sigma^2 / 2\kappa$, consistent with the results derived from the discrete-time analysis.

\subsection{Quantitative Analysis of Propagation of Chaos}
\label{app:propagation_of_chaos}

This appendix provides a quantitative propagation-of-chaos estimate for the $N$-particle system.
Let $Z_{i,t}:=(\rho_{i,t},X_{i,t})\in\mathbb{R}^2$ and denote by
\[
\mu_t^N:=\frac1N\sum_{i=1}^N \delta_{Z_{i,t}}
\qquad\text{and}\qquad
\mu_t:=\Law(\bar Z_t)
\]
the empirical measure and the deterministic limit law, respectively, where $(\bar Z_t)_{t\ge0}$ is the nonlinear (McKean--Vlasov) process associated with \eqref{eq:mckean_vlasov}.
We make explicit the sufficient conditions under which $\mu_t^N\Rightarrow \mu_t$ and provide an error bound used in Section~\ref{sec:numerics}.
Our presentation follows the classical coupling approach; see, e.g., \cite{Sznitman1991,Meleard1996}.

\paragraph{Mean-field representation.}
The one-step update of particle $i$ can be written as
\begin{equation}
\label{eq:poc_update}
Z_{i,t+1}=\Phi_t\!\left(Z_{i,t},\,\mu_t^N,\,\varepsilon_{i,t}\right),
\end{equation}
where $\varepsilon_{i,t}$ collects all exogenous randomness at time $t$ (skill shocks, match randomization, and outcome noise).
The nonlinear process satisfies
\begin{equation}
\label{eq:poc_nonlinear}
\bar Z_{t+1}=\Phi_t\!\left(\bar Z_t,\,\mu_t,\,\bar\varepsilon_t\right),
\qquad \mu_t=\Law(\bar Z_t),
\end{equation}
with $(\bar\varepsilon_t)_{t\ge0}$ i.i.d.\ copies of $\varepsilon_{i,t}$.

\begin{assumption}[Checklist for quantitative propagation of chaos]
\label{ass:poc_checklist}
Fix a finite horizon $T$.
\begin{enumerate}[label=(A\arabic*)]
\item \textbf{Uniform second moments.}
$\mu_0\in\mathcal{P}_{2+\delta}(\mathbb{R}^2)$ for some $\delta>0$, and
\begin{equation}
\label{eq:poc_moment_bound}
\sup_{0\le t\le T}\int_{\mathbb{R}^2} |z|^{2+\delta}\,\mu_t(dz)<\infty.
\end{equation}

\item \textbf{$L^2$-Lipschitz in state and measure (in $W_2$).}
There exists $C_\Phi<\infty$ such that for all $0\le t\le T$, all $z,z'\in\mathbb{R}^2$ and all $\mu,\nu\in\mathcal{P}_2(\mathbb{R}^2)$,
\begin{equation}
\label{eq:poc_lipschitz}
\mathbb{E}\!\left[\left|\Phi_t(z,\mu,\varepsilon)-\Phi_t(z',\nu,\varepsilon)\right|^2\right]
\le C_\Phi\left(|z-z'|^2 + W_2^2(\mu,\nu)\right),
\end{equation}
where the expectation is w.r.t.\ $\varepsilon$.

\item \textbf{Bounded (or Lipschitz) control dependence.}
All time-dependent coefficients in $\Phi_t$ (e.g.\ $K_t$, $L_{t+1}$, and $\eta_t$) are uniformly bounded on $[0,T]$.
If these coefficients are feedback controls depending on $\mu_t$ through a finite set of moments, the corresponding moment maps are Lipschitz on the moment-bounded set defined by \eqref{eq:poc_moment_bound}.
\end{enumerate}
\end{assumption}

\paragraph{A useful Lipschitz lemma for moment maps.}
The following bound makes (A3) explicit for our Gaussian-moment parametrization.

\begin{lemma}[Second-moment functionals are $W_2$-Lipschitz on moment-bounded sets]
\label{lem:moment_w2_lip}
Let $\mu,\nu\in\mathcal{P}_2(\mathbb{R}^2)$ and let $\pi$ be any coupling of $(Z,Z')\sim\pi$ with marginals $\mu,\nu$.
Assume $\int |z|^2\,\mu(dz)\le M$ and $\int |z|^2\,\nu(dz)\le M$.
Then, with $Z=(\rho,X)$ and $Z'=(\rho',X')$,
\begin{align}
\label{eq:moment_lip_examples}
\left|\mathbb{E}_\mu[X^2]-\mathbb{E}_\nu[(X')^2]\right|
&\le 2\sqrt{M}\,W_2(\mu,\nu),\\
\left|\mathbb{E}_\mu[\rho X]-\mathbb{E}_\nu[\rho' X']\right|
&\le 2\sqrt{M}\,W_2(\mu,\nu),
\end{align}
where $W_2(\mu,\nu)^2=\inf_{\pi}\mathbb{E}_\pi|Z-Z'|^2$.
Consequently, on the set $\{\mu:\int |z|^2\,\mu(dz)\le M\}$, the maps
$\mu\mapsto \Var_\mu(X)$ and $\mu\mapsto \Cov_\mu(\rho,X)$ are Lipschitz in $W_2$.
If moreover $\Var_\mu(\rho)=1$ and $\Var_\mu(X)$ is bounded away from $0$, then $\mu\mapsto r(\mu):=\Corr_\mu(\rho,X)$ is also Lipschitz.
\end{lemma}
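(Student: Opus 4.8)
The plan is to reduce each functional-difference to a Cauchy--Schwarz estimate against a coupling. Fix $\mu,\nu$ in the moment-bounded set and let $\pi$ be a coupling of $(Z,Z')=((\rho,X),(\rho',X'))$; the bounds I derive will hold for every such $\pi$ with $\sqrt{\mathbb{E}_\pi|Z-Z'|^2}$ on the right, so taking the infimum over $\pi$ (equivalently, using an optimal coupling, which exists since both marginals have finite second moment) yields the stated $W_2$ estimates. Two elementary observations drive everything: first, $|Z-Z'|^2=(\rho-\rho')^2+(X-X')^2$, so $\mathbb{E}_\pi[(X-X')^2]\le \mathbb{E}_\pi|Z-Z'|^2$ and likewise $\mathbb{E}_\pi[(\rho-\rho')^2]\le \mathbb{E}_\pi|Z-Z'|^2$; second, $|z|^2=\rho^2+x^2$, so $\int|z|^2\,\mu(dz)\le M$ forces $\mathbb{E}_\mu[\rho^2]\le M$ and $\mathbb{E}_\mu[X^2]\le M$, and similarly under $\nu$. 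Since $\pi$ has the correct marginals, $\mathbb{E}_\pi[X^2]=\mathbb{E}_\mu[X^2]$, $\mathbb{E}_\pi[(X')^2]=\mathbb{E}_\nu[(X')^2]$, $\mathbb{E}_\pi[\rho^2]=\mathbb{E}_\mu[\rho^2]$, etc.

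For the first inequality I would factor $X^2-(X')^2=(X-X')(X+X')$, take $\pi$-expectations, and apply Cauchy--Schwarz: $\bigl|\mathbb{E}_\mu[X^2]-\mathbb{E}_\nu[(X')^2]\bigr|\le \sqrt{\mathbb{E}_\pi[(X-X')^2]}\,\sqrt{\mathbb{E}_\pi[(X+X')^2]}$. The second factor is controlled via $(X+X')^2\le 2X^2+2(X')^2$, which gives $\mathbb{E}_\pi[(X+X')^2]\le 2\mathbb{E}_\mu[X^2]+2\mathbb{E}_\nu[(X')^2]\le 4M$, hence the constant $2\sqrt{M}$. For the second inequality I would use the telescoping identity $\rho X-\rho'X'=\rho(X-X')+X'(\rho-\rho')$, take $\pi$-expectations, and apply Cauchy--Schwarz to each term separately, using $\mathbb{E}_\pi[\rho^2]\le M$ and $\mathbb{E}_\pi[(X')^2]\le M$; this produces $\sqrt{M}\,\sqrt{\mathbb{E}_\pi[(X-X')^2]}+\sqrt{M}\,\sqrt{\mathbb{E}_\pi[(\rho-\rho')^2]}\le 2\sqrt{M}\,\sqrt{\mathbb{E}_\pi|Z-Z'|^2}$, and the infimum over $\pi$ closes it.

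For the consequences I would first pass from raw second moments to centered ones: since $\Var_\mu(X)=\mathbb{E}_\mu[X^2]-(\mathbb{E}_\mu X)^2$, the correction obeys $\bigl|(\mathbb{E}_\mu X)^2-(\mathbb{E}_\nu X')^2\bigr|\le|\mathbb{E}_\pi[X-X']|\cdot|\mathbb{E}_\mu X+\mathbb{E}_\nu X'|\le 2\sqrt{M}\,\sqrt{\mathbb{E}_\pi|Z-Z'|^2}$ by Jensen and Cauchy--Schwarz, and the analogous estimate handles the $\mathbb{E}_\mu[\rho]\mathbb{E}_\mu[X]$ term in $\Cov_\mu(\rho,X)$; combining with the two main bounds shows $\mu\mapsto\Var_\mu(X)$ and $\mu\mapsto\Cov_\mu(\rho,X)$ are $W_2$-Lipschitz on $\{\int|z|^2\,d\mu\le M\}$ (under the paper's zero-mean normalization the correction terms vanish and the constant is exactly $2\sqrt{M}$). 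Finally, on the subset where in addition $\Var_\mu(\rho)=1$ and $\Var_\mu(X)\ge c>0$, the correlation $r(\mu)=\Cov_\mu(\rho,X)\big/\sqrt{\Var_\mu(X)}$ is a product of a bounded Lipschitz function with $t\mapsto t^{-1/2}$ composed with the Lipschitz, $[c,M]$-valued map $\Var_\mu(X)$; since $t^{-1/2}$ is Lipschitz on $[c,\infty)$ and a product of bounded Lipschitz functions on a bounded set is Lipschitz, $r$ is $W_2$-Lipschitz there. I do not anticipate a real obstacle: the estimates are all one line of Cauchy--Schwarz. The only point deserving care is the bookkeeping of the mean-correction terms and, more importantly, noting that the Lipschitz constant for $r$ blows up as $c\downarrow 0$ — which is precisely why the hypotheses restrict to the nondegenerate regime $\Var_\mu(X)\ge c$, whose preservation along the controlled dynamics is established separately.
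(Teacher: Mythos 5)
Your proof is correct and follows essentially the same route as the paper's sketch: the same algebraic factorizations ($X^2-(X')^2=(X-X')(X+X')$ and the telescoping of $\rho X-\rho'X'$), the same Cauchy--Schwarz step under a coupling, and the same $4M$ bound on $\mathbb{E}_\pi[(X+X')^2]$. You also spell out the passage to $\Var$, $\Cov$, and $\Corr$ (which the paper leaves implicit), and correctly flag that the Lipschitz constant for $r(\mu)$ degenerates as the lower bound on $\Var_\mu(X)$ tends to zero.
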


\begin{proof}[Proof sketch]
Use the identity $|X^2-(X')^2|=|X-X'||X+X'|$ and Cauchy--Schwarz under an optimal coupling:
$\mathbb{E}|X-X'||X+X'|\le (\mathbb{E}|X-X'|^2)^{1/2}(\mathbb{E}|X+X'|^2)^{1/2}$.
Moment boundedness gives $\mathbb{E}|X+X'|^2\le 4M$, yielding \eqref{eq:moment_lip_examples}.
The cross-moment bound is similar using $|\rho X-\rho'X'|\le |\rho||X-X'|+|X'||\rho-\rho'|$.
\end{proof}

\paragraph{Verification in our model.}
Under the Gaussian closure (Proposition~\ref{prop:gaussianity}) and the uniform bounds in Proposition~\ref{prop:equilibrium},
the coefficients entering the microscopic update are uniformly bounded on finite horizons.
Moreover, the dependence of the update on $\mu_t^N$ enters through second moments (equivalently, through $(\sigma_t,\Cov(\rho,X),r_t)$),
and Lemma~\ref{lem:moment_w2_lip} provides the required Lipschitz control in $W_2$.
Since the update is affine in $(Z_{i,t},Z_{j,t})$ and the exogenous noise is Gaussian, the $L^2$-Lipschitz condition \eqref{eq:poc_lipschitz} follows.

\subsubsection*{Quantitative estimate}
We now state a quantitative propagation-of-chaos bound.

\begin{proposition}[Quantitative propagation of chaos on a finite horizon]
\label{prop:quant_poc}
Suppose Assumption~\ref{ass:poc_checklist} holds.
Then for every finite horizon $T$ there exists $C_T<\infty$ such that
\begin{equation}
\label{eq:quant_poc_bound}
\sup_{0\le t\le T}\mathbb{E}\!\left[W_2^2(\mu_t^N,\mu_t)\right]
\le C_T\Big(\mathbb{E}[W_2^2(\mu_0^N,\mu_0)] + \alpha_N(T)\Big),
\end{equation}
where $\alpha_N(T)$ is the empirical-measure error of i.i.d.\ samples from $\mu_t$:
\[
\alpha_N(T):=\sup_{0\le t\le T}\mathbb{E}\!\left[W_2^2\!\Big(\tfrac1N\sum_{i=1}^N\delta_{\bar Z_{i,t}},\,\mu_t\Big)\right],
\qquad \bar Z_{i,t}\ \text{i.i.d.}\sim \mu_t.
\]
Moreover, since $\mu_t\in\mathcal{P}_{2+\delta}$ uniformly on $[0,T]$, one has
\begin{equation}
\label{eq:fg_rate}
\alpha_N(T)=O\!\left(N^{-1/2}\right)\quad\text{up to logarithmic factors in dimension }2,
\end{equation}
so $\mu_t^N\Rightarrow \mu_t$ and the system is $\mu_t$-chaotic in the sense of \cite{Sznitman1991}.
\end{proposition}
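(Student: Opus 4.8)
The plan is to run the classical synchronous-coupling argument of Sznitman \cite{Sznitman1991} (see also \cite{Meleard1996}), upgraded to a quantitative recursion in the squared $2$-Wasserstein distance, and then to feed in the sharp empirical-measure rates for i.i.d.\ samples. First I would construct, on a common probability space, an auxiliary system $(\bar Z_{i,t})_{i\le N,\ 0\le t\le T}$ consisting of $N$ \emph{independent} copies of the nonlinear process \eqref{eq:poc_nonlinear}, where the $i$-th copy is driven by the \emph{same} exogenous randomness $\varepsilon_{i,t}$ that drives particle $i$ in \eqref{eq:poc_update}, and where at $t=0$ the pair $(Z_{i,0},\bar Z_{i,0})$ is coupled so that $\tfrac1N\sum_i\mathbb{E}\,|Z_{i,0}-\bar Z_{i,0}|^2$ is within a universal constant of $\mathbb{E}[W_2^2(\mu_0^N,\mu_0)]$ (using exchangeability of the particles and an optimal matching of $\mu_0^N$ to $\mu_0$; this selection step is the first technical point to handle carefully). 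Writing $\bar\mu_t^N:=\tfrac1N\sum_i\delta_{\bar Z_{i,t}}$ and $e_t:=\tfrac1N\sum_i\mathbb{E}\,|Z_{i,t}-\bar Z_{i,t}|^2$, the triangle inequality for $W_2$ together with the elementary bound $W_2^2(\mu_t^N,\bar\mu_t^N)\le \tfrac1N\sum_i|Z_{i,t}-\bar Z_{i,t}|^2$ gives
\[
\mathbb{E}\bigl[W_2^2(\mu_t^N,\mu_t)\bigr]\ \le\ 2e_t+2\,\mathbb{E}\bigl[W_2^2(\bar\mu_t^N,\mu_t)\bigr],
\]
where, crucially, $(\bar Z_{i,t})_i$ are i.i.d.\ with law $\mu_t$, so the last expectation is exactly the i.i.d.\ empirical error and is dominated by $\alpha_N(T)$ for every $t\le T$.

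Next I would derive the one-step stability estimate. Since the two systems share the noise $\varepsilon_{i,t}$ (which is independent of the natural filtration $\mathcal F_t$), the $L^2$-Lipschitz hypothesis (A2) in \eqref{eq:poc_lipschitz}, applied conditionally on $\mathcal F_t$ with $z=Z_{i,t}$, $z'=\bar Z_{i,t}$, $\mu=\mu_t^N$, $\nu=\mu_t$, yields
\[
\mathbb{E}\bigl[|Z_{i,t+1}-\bar Z_{i,t+1}|^2\,\big|\,\mathcal F_t\bigr]\ \le\ C_\Phi\bigl(|Z_{i,t}-\bar Z_{i,t}|^2 + W_2^2(\mu_t^N,\mu_t)\bigr).
\]
Averaging over $i$, taking expectations, and inserting the split above gives the recursion $e_{t+1}\le 3C_\Phi\,e_t+2C_\Phi\,\alpha_N(T)$, whence a discrete Gronwall iteration over $t=0,\dots,T$ produces $\sup_{t\le T}e_t\le (3C_\Phi)^{T}\bigl(e_0+2C_\Phi\,T\,\alpha_N(T)\bigr)$; combined with $e_0\le C\,\mathbb{E}[W_2^2(\mu_0^N,\mu_0)]$ and the split once more, this is precisely \eqref{eq:quant_poc_bound} with $C_T$ depending only on $C_\Phi$ and $T$. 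Hypothesis (A3) enters here to guarantee that $C_\Phi$ in (A2) is uniform on $[0,T]$ (boundedness of $K_t$, $L_{t+1}$, $\eta_t$), and, via Lemma~\ref{lem:moment_w2_lip}, that when the coefficients are feedback controls depending on the empirical second moments, this dependence is itself $W_2$-Lipschitz on the moment-bounded set from (A1) and can thus be absorbed into the constant of (A2).

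Finally, to obtain the explicit rate \eqref{eq:fg_rate}, I would invoke the Fournier--Guillin bound for the $2$-Wasserstein deviation of i.i.d.\ empirical measures: since (A1)/\eqref{eq:poc_moment_bound} ensures $\mu_t\in\mathcal P_{2+\delta}(\mathbb{R}^2)$ uniformly on $[0,T]$, one has $\mathbb{E}[W_2^2(\tfrac1N\sum_i\delta_{\bar Z_{i,t}},\mu_t)]=O(N^{-1/2})$ uniformly in $t\le T$ (dimension $d=2<4$, at most a logarithmic correction), hence $\alpha_N(T)=O(N^{-1/2})$; weak convergence $\mu_t^N\Rightarrow\mu_t$ and $\mu_t$-chaoticity in the sense of \cite{Sznitman1991} then follow from \eqref{eq:quant_poc_bound}. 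Along the way one also needs the preliminary observation, proved by the same Gronwall scheme from (A2)--(A3), that the $N$-particle system inherits a uniform moment bound $\sup_{t\le T}\sup_N \tfrac1N\sum_i\mathbb{E}\,|Z_{i,t}|^{2+\delta}<\infty$, which guarantees that all the $W_2^2$ quantities above are finite. The bulk of the argument is a routine contraction-type Gronwall estimate; I expect the only genuinely delicate points to be (i) transferring the measure-level initial error $\mathbb{E}[W_2^2(\mu_0^N,\mu_0)]$ into a particle-wise coupling error $e_0$ without destroying the independence of the $\bar Z_{i,t}$'s, and (ii) making the reduction (A3) $\Rightarrow$ (A2) for feedback controls fully rigorous using Lemma~\ref{lem:moment_w2_lip} and the uniform moment bound.
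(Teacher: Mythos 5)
Your proposal follows essentially the same synchronous-coupling scheme as the paper's proof sketch: couple the $N$-particle system with $N$ i.i.d.\ copies of the nonlinear process driven by the same exogenous noise, use the $L^2$-Lipschitz hypothesis (A2) to obtain a one-step contraction in $e_t=\tfrac1N\sum_i\mathbb{E}|Z_{i,t}-\bar Z_{i,t}|^2$, split $W_2^2(\mu_t^N,\mu_t)$ into the coupling error plus the i.i.d.\ empirical error, close the loop with a discrete Gr\"onwall iteration, and invoke Fournier--Guillin for the $O(N^{-1/2})$ rate. You are somewhat more explicit than the paper's sketch on two points it glosses over --- transferring the measure-level initial error $\mathbb{E}[W_2^2(\mu_0^N,\mu_0)]$ into a particle-wise coupling error $e_0$ while preserving independence of the $\bar Z_i$'s, and the uniform $(2+\delta)$-moment bound for the $N$-particle system --- but the route is the same.
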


\begin{proof}[Proof sketch]
Couple $(Z_{i,t})_{i=1}^N$ with i.i.d.\ copies $(\bar Z_{i,t})_{i=1}^N$ of the nonlinear process by using the same exogenous noises and an optimal coupling of the interaction step.
Using \eqref{eq:poc_lipschitz} yields a recursion
\[
\mathbb{E}|Z_{i,t+1}-\bar Z_{i,t+1}|^2
\le C\Big(\mathbb{E}|Z_{i,t}-\bar Z_{i,t}|^2 + \mathbb{E}W_2^2(\mu_t^N,\mu_t)\Big).
\]
By exchangeability and the definition of $W_2$,
$\mathbb{E}W_2^2(\mu_t^N,\mu_t)\le \frac1N\sum_{i=1}^N\mathbb{E}|Z_{i,t}-\bar Z_{i,t}|^2+\alpha_N(T)$.
A discrete Gr\"onwall argument gives \eqref{eq:quant_poc_bound}.
The rate \eqref{eq:fg_rate} follows from standard bounds for empirical measures in Wasserstein distance (e.g.\ \cite{FournierGuillin2015}).
\end{proof}

\paragraph{Connection to the error of empirical moments.}
Since $r_t^{(N)}$ is a smooth functional of empirical second moments, Lemma~\ref{lem:moment_w2_lip} implies that
$|r_t^{(N)}-r_t|=O_{\mathbb{P}}(N^{-1/2})$ on finite horizons, consistent with the numerical scaling in Section~\ref{sec:numerics}.

\subsection{Detailed Derivation of the Variance Dynamics (Proof of Theorem \ref{thm:transition_function})}
\label{app:variance_derivation}

Here we provide the fully explicit algebraic derivation of the pre-scaling variance $\Lambda^2$ and the explicit form of the transition function $\Psi$.
Let $\tilde{X}_{t+1}$ be the unscaled update variable defined by the bracketed term in the McKean--Vlasov equation \eqref{eq:mckean_vlasov}:
\begin{equation}
    \tilde{X}_{t+1} = \bar{X}_t + K_t (\bar{\rho}_t - \bar{\rho}'_t + \omega_t - \bar{X}_t + \bar{X}'_t) = (1-K_t)\bar{X}_t + K_t\bar{X}'_t + K_t(\bar{\rho}_t - \bar{\rho}'_t) + K_t\omega_t.
\end{equation}
The post-update scaling factor $L_{t+1}$ ensures that the new rating variance remains normalized to a target value $\sigma_{t+1}^2$. Specifically, $L_{t+1} = \sigma_{t+1} / \Lambda$, where $\Lambda^2 = \Var(\tilde{X}_{t+1})$.

We calculate $\Lambda^2$ by separating $\tilde{X}_{t+1}$ into three uncorrelated components based on the mean-field assumption: the rating component $A$, the skill/noise component $B_{\rho}$, and the observation noise component $B_\omega$.
The variance $\Var(\tilde{X}_{t+1})$ can be decomposed as:
\begin{equation}
\label{eq:var_decomp}
    \Lambda^2 = \Var(A) + \Var(B_{\rho}) + \Var(B_\omega) + 2 \Cov(A, B_{\rho}).
\end{equation}
Here $A = (1-K_t)\bar{X}_t + K_t\bar{X}'_t$ and $B_{\rho} = K_t(\bar{\rho}_t - \bar{\rho}'_t)$. The term $K_t\omega_t$ is independent of the other variables, hence its covariance with $A$ and $B_\rho$ is zero.

\subsubsection{1. Rating Terms ($\Var(A)$)}
Using the property $\Cov(\bar{X}_t, \bar{X}'_t) = \eta_t \sigma_t^2$ (derived from the kernel $\mathcal{K}_\eta$):
\begin{align*}
    \Var(A) &= \Var((1-K_t)\bar{X}_t + K_t\bar{X}'_t) \\
            &= (1-K_t)^2\Var(\bar{X}_t) + K_t^2\Var(\bar{X}'_t) + 2K_t(1-K_t)\Cov(\bar{X}_t, \bar{X}'_t) \\
            &= \sigma_t^2 \left[ (1-K_t)^2 + K_t^2 + 2K_t(1-K_t)\eta_t \right] \\
            &= \sigma_t^2 \left[ 1 - 2K_t + 2K_t^2 + 2K_t\eta_t - 2K_t^2\eta_t \right].
\end{align*}

\subsubsection{2. Skill and Noise Terms ($\Var(B_{\rho}) + \Var(B_\omega)$)}
First, we establish the required covariance between the skills. Since $\bar{\rho}'_t$ is conditionally coupled to $\bar{X}'_t$ and $\Cov(\bar{\rho}_t, \bar{X}'_t) = \eta_t r_t \sigma_t$, we have $\Cov(\bar{\rho}_t, \bar{\rho}'_t) = \eta_t r_t^2$.
Using $\Var(\bar{\rho}_t) = \Var(\bar{\rho}'_t) = 1$:
\begin{align*}
    \Var(B_{\rho}) &= K_t^2 \Var(\bar{\rho}_t - \bar{\rho}'_t) \\
                   &= K_t^2 \left[ \Var(\bar{\rho}_t) + \Var(\bar{\rho}'_t) - 2\Cov(\bar{\rho}_t, \bar{\rho}'_t) \right] \\
                   &= K_t^2 \left[ 1 + 1 - 2\eta_t r_t^2 \right] = K_t^2 \cdot 2(1 - \eta_t r_t^2).
\end{align*}
The variance of the observation noise term is $\Var(K_t\omega_t) = K_t^2 \beta^2$.
Summing them:
\begin{equation*}
    \Var(B_{\rho}) + \Var(B_\omega) = K_t^2 \left[ \beta^2 + 2(1 - \eta_t r_t^2) \right].
\end{equation*}

\subsubsection{3. Cross-Covariance Terms ($2\Cov(A, B_{\rho})$)}
We need $2K_t \Cov((1-K_t)\bar{X}_t + K_t\bar{X}'_t, \bar{\rho}_t - \bar{\rho}'_t)$.
We use the fact that $\Cov(\bar{X}_t, \bar{\rho}_t) = r_t \sigma_t$ and $\Cov(\bar{X}_t, \bar{\rho}'_t) = \eta_t r_t \sigma_t$.
\begin{align*}
    \Cov(A, \bar{\rho}_t) &= (1-K_t)\underbrace{\Cov(\bar{X}_t, \bar{\rho}_t)}_{r_t \sigma_t} + K_t\underbrace{\Cov(\bar{X}'_t, \bar{\rho}_t)}_{\eta_t r_t \sigma_t} \\
                         &= r_t \sigma_t \left[ (1-K_t) + K_t\eta_t \right].
\end{align*}
And similarly for $\Cov(A, \bar{\rho}'_t)$:
\begin{align*}
    \Cov(A, \bar{\rho}'_t) &= (1-K_t)\underbrace{\Cov(\bar{X}_t, \bar{\rho}'_t)}_{\eta_t r_t \sigma_t} + K_t\underbrace{\Cov(\bar{X}'_t, \bar{\rho}'_t)}_{r_t \sigma_t} \\
                         &= r_t \sigma_t \left[ (1-K_t)\eta_t + K_t \right].
\end{align*}
The required covariance is $2K_t \left[ \Cov(A, \bar{\rho}_t) - \Cov(A, \bar{\rho}'_t) \right]$:
\begin{align*}
    2 \Cov(A, B_{\rho}) &= 2K_t r_t \sigma_t \left\{ \left[ (1-K_t) + K_t\eta_t \right] - \left[ (1-K_t)\eta_t + K_t \right] \right\} \\
                       &= 2K_t r_t \sigma_t \left\{ (1-K_t)(1-\eta_t) - K_t(1-\eta_t) \right\} \\
                       &= 2K_t (1-\eta_t) r_t \sigma_t (1 - 2K_t).
\end{align*}

\subsubsection{4. Final Variance and Transition Function}
Summing the three components, $\Lambda^2 = (\Var(A)) + (\Var(B_{\rho}) + \Var(B_\omega)) + (2 \Cov(A, B_{\rho}))$:
\begin{align*}
    \Lambda^2 &= \sigma_t^2 \left[ 1 - 2K_t + 2K_t^2 + 2K_t\eta_t - 2K_t^2\eta_t \right] \\
              &+ K_t^2 \left[ \beta^2 + 2 - 2\eta_t r_t^2 \right] \\
              &+ 2K_t (1-\eta_t) r_t \sigma_t (1 - 2K_t).
\end{align*}
This complex expression can be algebraically simplified to yield the form presented in the main text (Eq. \eqref{eq:variance_term}), confirming the result.

The transition map $\Psi$ is then derived from the definition of the new correlation $r_{t+1}$:
\begin{equation}
\label{eq:r_t+1_def}
    r_{t+1} = \frac{\Cov(\bar{\rho}_{t+1}, \bar{X}_{t+1})}{\sigma_{\rho, t+1} \sigma_{t+1}} = \frac{\Cov(\bar{\rho}_{t+1}, \bar{X}_{t+1})}{\sigma_{t+1}}.
\end{equation}
Substituting the update rules $\bar{\rho}_{t+1} = \lambda \bar{\rho}_t + \sqrt{1-\lambda^2} \xi_t$ and $\bar{X}_{t+1} = L_{t+1} \tilde{X}_{t+1}$, and using $\Cov(\bar{\rho}_{t+1}, \bar{X}_{t+1}) = \lambda L_{t+1} \Cov(\bar{\rho}_t, \tilde{X}_{t+1})$, we obtain:
\begin{equation}
    r_{t+1} = \frac{\lambda L_{t+1} \Cov(\bar{\rho}_t, \tilde{X}_{t+1})}{\sigma_{t+1}}.
\end{equation}
Since $L_{t+1} = \sigma_{t+1} / \Lambda$, the transition function simplifies to:
\begin{equation}
\label{eq:psi_simplified}
    r_{t+1} = \Psi(r_t) = \frac{\lambda \Cov(\bar{\rho}_t, \tilde{X}_{t+1})}{\Lambda}.
\end{equation}
The numerator $\Cov(\bar{\rho}_t, \tilde{X}_{t+1})$ is calculated as:
\begin{align*}
    \Cov(\bar{\rho}_t, \tilde{X}_{t+1}) &= \Cov(\bar{\rho}_t, (1-K_t)\bar{X}_t + K_t\bar{X}'_t + K_t(\bar{\rho}_t - \bar{\rho}'_t)) \\
                                       &= (1-K_t)\Cov(\bar{\rho}_t, \bar{X}_t) + K_t\Cov(\bar{\rho}_t, \bar{X}'_t) + K_t \left[ \Var(\bar{\rho}_t) - \Cov(\bar{\rho}_t, \bar{\rho}'_t) \right] \\
                                       &= (1-K_t) r_t \sigma_t + K_t \eta_t r_t \sigma_t + K_t [1 - \eta_t r_t^2].
\end{align*}
Substituting this numerator into \eqref{eq:psi_simplified} yields the explicit formula for $\Psi(r_t)$ presented in Theorem \ref{thm:transition_function}.

\subsection{Optimal Filtering and Proof of Theorem \ref{thm:invariance}}
\label{app:invariance_proof}

This appendix provides a complete proof of Theorem~\ref{thm:invariance}. The proof addresses the two
technical points emphasized in the main text: (i) existence (attainment) of the maximizer on the
non-compact domain $(0,\infty)^2$, and (ii) uniqueness and interiority of the optimal control pair.
Throughout, fix a current accuracy $r\in(0,1)$ and an interaction intensity $\eta\in[0,1)$.

\subsubsection{One-step transition as a ratio of quadratic forms}

Recall from Theorem~\ref{thm:transition_function} that for $K>0$ and $\sigma>0$,
\begin{equation}
\label{eq:app_Psi_formula}
\Psi(r,K,\eta,\sigma)
=\lambda\,\frac{\mathcal{N}(K,\sigma)}{\sqrt{\Lambda^2(K,\sigma)}},
\end{equation}
where the numerator (covariance term) is
\begin{equation}
\label{eq:app_num_def}
\mathcal{N}(K,\sigma)
=r\sigma\bigl(1-K(1-\eta)\bigr)+K\bigl(1-\eta r^2\bigr),
\end{equation}
and the pre-scaling variance $\Lambda^2=\Var(\tilde X_{t+1})$ is given explicitly in
Appendix~\ref{app:variance_derivation}. Expanding $\Lambda^2$ as a polynomial in $K$ yields the
quadratic representation
\begin{equation}
\label{eq:app_den_quadratic}
\Lambda^2(K,\sigma)=u(\sigma)+v(\sigma)\,K+w(\sigma)\,K^2,
\end{equation}
with coefficients
\begin{align}
\label{eq:app_uvwpq_def}
u(\sigma)&:=\sigma^2,\\
v(\sigma)&:=2(1-\eta)\sigma(r-\sigma),\\
w(\sigma)&:=\beta^2+2(1-r^2)+2(1-\eta)(\sigma-r)^2.
\end{align}
In particular, $u(\sigma)>0$ and $w(\sigma)>0$ for all $\sigma>0$. Moreover,
\begin{equation}
\label{eq:app_posdef_discriminant}
4u(\sigma)w(\sigma)-v(\sigma)^2
=4\sigma^2\Bigl(\beta^2+2(1-r^2)+(1-\eta^2)(\sigma-r)^2\Bigr)>0,
\end{equation}
so the quadratic form $u+vK+wK^2$ is strictly positive for all $K\in\mathbb{R}$ and $\sigma>0$.

For later use, also write the numerator as an affine function of $K$:
\begin{equation}
\label{eq:app_affine_num}
\mathcal{N}(K,\sigma)=p(\sigma)+q(\sigma)\,K,
\qquad
p(\sigma):=r\sigma,\quad
q(\sigma):=1-\eta r^2-(1-\eta)r\sigma.
\end{equation}

\subsubsection{Maximization over $K$ for a fixed $\sigma$}

Fix $\sigma>0$ and define
\[
F_{\sigma}(K):=\frac{\mathcal{N}(K,\sigma)^2}{\Lambda^2(K,\sigma)}
=\frac{(p(\sigma)+q(\sigma)K)^2}{u(\sigma)+v(\sigma)K+w(\sigma)K^2}.
\]
Since the denominator is strictly positive by \eqref{eq:app_posdef_discriminant}, $F_{\sigma}$ is
continuous on $\mathbb{R}$. A direct differentiation gives exactly two stationary points:
one at $K=-p/q$ (which makes the numerator zero) and one at
\begin{equation}
\label{eq:app_Kstar_general}
K^\sharp(\sigma)
=\frac{-p(\sigma)\,v(\sigma)+2q(\sigma)\,u(\sigma)}
{2p(\sigma)\,w(\sigma)-q(\sigma)\,v(\sigma)}.
\end{equation}
Evaluating at $K^\sharp(\sigma)$ yields the global maximum over $K\in\mathbb{R}$:
\begin{equation}
\label{eq:app_fmax_closed}
\sup_{K\in\mathbb{R}} F_{\sigma}(K)
=
\frac{4\bigl(p(\sigma)^2w(\sigma)-p(\sigma)q(\sigma)v(\sigma)+q(\sigma)^2u(\sigma)\bigr)}
{4u(\sigma)w(\sigma)-v(\sigma)^2}.
\end{equation}
Because the denominator in \eqref{eq:app_fmax_closed} is strictly positive
\eqref{eq:app_posdef_discriminant}, the maximizer is unique. In particular, the non-compactness of the
domain in $K$ is harmless: for each fixed $\sigma>0$ the supremum over $K$ is attained at a finite
value $K^\sharp(\sigma)$.

Since the one-step problem \eqref{eq:one_step_problem} maximizes $\Psi$ (equivalently $\Psi^2$),
\eqref{eq:app_Psi_formula} implies
\begin{equation}
\label{eq:app_reduce_to_sigma}
\sup_{K>0,\ \sigma>0}\Psi(r,K,\eta,\sigma)
\le
\sup_{\sigma>0}\ \lambda\sqrt{\ \sup_{K\in\mathbb{R}}F_{\sigma}(K)\ }.
\end{equation}
Therefore, it suffices to maximize the closed-form upper envelope in $\sigma$; we will see that the
maximizer occurs at $\sigma=r$ and at that point the maximizing $K$ is strictly positive, so the
inequality in \eqref{eq:app_reduce_to_sigma} becomes an equality and yields the desired optimizer on the
admissible set $K>0$.

\subsubsection{Maximization over $\sigma$ and uniqueness of signal-matching}

Let $s:=\sigma-r$ and write the envelope \eqref{eq:app_fmax_closed} in terms of $s^2$.
A direct simplification of \eqref{eq:app_fmax_closed} using \eqref{eq:app_uvwpq_def}--\eqref{eq:app_affine_num}
gives
\begin{equation}
\label{eq:app_env_s2}
\sup_{K\in\mathbb{R}} F_{\sigma}(K)
=
\frac{A + B\,s^2}{C + D\,s^2},
\qquad s=\sigma-r,
\end{equation}
where
\begin{equation}
\label{eq:app_ABCD}
A:=r^4-r^2\beta^2-1,\quad
B:=-(1-\eta^2)r^2,\quad
C:=2r^2-\beta^2-2,\quad
D:=-(1-\eta^2).
\end{equation}
Note that $B<0$ and $D<0$ because $\eta\in[0,1)$ and $r\in(0,1)$.

Set $y:=s^2\in[0,\infty)$. The right-hand side of \eqref{eq:app_env_s2} is a fractional-linear function
in $y$:
\[
G(y):=\frac{A+By}{C+Dy},\qquad y\ge0.
\]
Its derivative is constant-sign:
\begin{equation}
\label{eq:app_G_derivative}
G'(y)=\frac{BC-DA}{(C+Dy)^2}.
\end{equation}
Using \eqref{eq:app_ABCD}, we compute
\[
BC-DA
=-(1-\eta^2)\bigl(r^2C-A\bigr)
=-(1-\eta^2)\bigl((1-r^2)^2\bigr)<0,
\]
where the identity $r^2C-A=(1-r^2)^2$ is a direct algebraic simplification. Hence $G'(y)<0$ for all
$y\ge0$, so $G$ is strictly decreasing in $y$. Therefore, the envelope
$\sup_{K\in\mathbb{R}}F_{\sigma}(K)=G((\sigma-r)^2)$ is uniquely maximized at $y=0$, i.e.
\begin{equation}
\label{eq:app_sigma_star}
\sigma^*(r)=r.
\end{equation}
This establishes \emph{attainment} and \emph{uniqueness} in the $\sigma$-direction on the non-compact
domain $\sigma>0$.

\subsubsection{Optimal gain and invariance}

Substituting $\sigma=r$ into \eqref{eq:app_uvwpq_def}--\eqref{eq:app_affine_num} gives
\[
u=r^2,\qquad v=0,\qquad w=\beta^2+2(1-r^2),\qquad p=r^2,\qquad q=1-r^2,
\]
which are all independent of $\eta$. The maximizing gain \eqref{eq:app_Kstar_general} reduces to
\begin{equation}
\label{eq:app_K_star}
K^*(r)=\frac{1-r^2}{\beta^2+2(1-r^2)}=\frac{1-r^2}{2(1-r^2)+\beta^2}>0,
\end{equation}
and is therefore admissible. Plugging $(K^*(r),\sigma^*(r))$ into \eqref{eq:app_Psi_formula} yields
\[
\begin{aligned}
\Psi\bigl(r,K^*(r),\eta,\sigma^*(r)\bigr)
&=\lambda\,\frac{r^2+K^*(r)(1-r^2)}{\sqrt{r^2+\bigl(K^*(r)\bigr)^2\bigl(\beta^2+2(1-r^2)\bigr)}} \\
&=\lambda\sqrt{\,r^2+\frac{(1-r^2)^2}{\beta^2+2(1-r^2)}\,}.
\end{aligned}
\]
This is exactly the invariant map $\Phi$ stated in \eqref{eq:invariant_map_32}. In particular, both
$K^*(r)$ and $\Phi(r)$ are independent of $\eta$, which proves the invariance principle.

Finally, uniqueness of the optimizer $(K^*(r),\sigma^*(r))$ for $\eta\in[0,1)$ follows from:
(i) the strict decrease in $y=(\sigma-r)^2$ in \eqref{eq:app_env_s2} (hence $\sigma=r$ is the unique
maximizer), and (ii) for $\sigma=r$ the one-variable problem in $K$ has a unique maximizer
\eqref{eq:app_K_star} because $\Psi(r,K,\eta,r)^2$ is a ratio of a strictly convex quadratic to a
positive quadratic with a single interior critical point.

\begin{remark}[On the boundary case $\eta=1$]
See Remark~\ref{rem:eta_range} for the admissible range.
Formally, $\eta=1$ corresponds to perfect sorting and can be represented by the degenerate kernel
$\mathcal{K}_1(x,dx')=\delta_x(dx')$, in which case $(1-\eta^2)=0$ and some density-based expressions
(e.g., those involving $(1-\eta^2)$) must be interpreted in the limiting sense $\eta\uparrow 1$.
In the platform problem we exclude $\eta=1$ because we assume $\lim_{\eta\uparrow 1}C(\eta)=+\infty$.
\end{remark}

\bibliographystyle{aims}
\bibliography{references}
\end{document}